\documentclass[a4paper,UKenglish]{lipics-v2016} 
\usepackage{microtype}
\usepackage{mathrsfs}
\usepackage{lineno, blindtext}
\bibliographystyle{plainurl}
\title{Two Phase Transitions in Two-way Bootstrap Percolation}
\titlerunning{Two Phase Transitions in Two-way Bootstrap Percolation}
\author[1]{Ahad N. Zehmakan}
\affil[1]{ETH Zurich, Switzerland\\
\texttt{abdolahad.noori@inf.ethz.ch}}
\authorrunning{Ahad N. Zehmakan} 
\Copyright{}
\subjclass{F.2: ANALYSIS OF ALGORITHMS AND PROBLEM COMPLEXITY} 
\keywords{bootstrap percolation, cellular automata, phase transition, $d$-dimensional torus, $r$-threshold model, biased majority.}
\EventEditors{}
\EventNoEds{2}
\EventLongTitle{.}
\EventShortTitle{}
\EventAcronym{}
\EventYear{2019}
\EventDate{}
\EventLocation{}
\EventLogo{}
\SeriesVolume{}
\ArticleNo{}

\begin{document}
\maketitle

\begin{abstract}
Consider a graph $G$ and an initial random configuration, where each node is black with probability $p$ and white otherwise, independently. In discrete-time rounds, each node becomes black if it has at least $r$ black neighbors and white otherwise. We prove that this basic process exhibits a threshold behavior with two phase transitions when the underlying graph is a $d$-dimensional torus and identify the threshold values.
\end{abstract}
\vspace{-0.1 cm}
\section{Introduction}
Consider a graph $G=(V,E)$ and an initial \emph{random configuration}, where each node is independently black with probability $p$ and white otherwise. In \emph{$r$-bootstrap percolation} (or shortly $r$-BP) for some positive integer $r$, in each discrete-time round white nodes with at least $r$ black neighbors become black and black nodes stay unchanged. This basic process is meant to model different progressive dynamics like rumor spreading in a society, fire propagation in a forest, and infection spreading among cells, where a black/white node corresponds to an individual who is informed/uninformed of a rumor, a tree which is or not on fire, or an infected/uninfected cell.

In the above examples if a node becomes black, it remains black forever. For instance, an individual who is informed of a rumor remains informed. However, there exist many real-world examples where nodes might keep switching between black and white. For example, two service providers might be competing to get people adopting their services, and thus users may switch among two services back and forth. Another example is opinion forming regarding an election in a community, where an individual might adopt the positive opinion if a certain number/fraction of its connections are positive, and become negative otherwise. To study this kind of non-progressive processes the following model has been introduced. For a graph $G$ and an initial random configuration, in \emph{two-way $r$-bootstrap percolation} in each round a node becomes black if it has at least $r$ black neighbors and white otherwise.

The behavior of these two basic models have been extensively studied by researchers from a wide spectrum of fields, like statistical physics~\cite{aizenman1988metastability,schonmann1992behavior}, distributed computing~\cite{peleg1997local,flocchini2004dynamic,frischknecht2013convergence}, mathematics~\cite{balister2010random,kanoria2011majority}, and even sociology~\cite{granovetter1978threshold} due to their various applications, such as distributed fault-local mending~\cite{peleg1997local}, modeling biological interactions~\cite{molofsky1999local}, viral marketing~\cite{kempe2003maximizing}, and modeling disordered magnetic systems~\cite{balogh2012sharp}.

The first natural question arises: How long does it take for the above processes to stabilize? Both of these processes are induced by deterministic updating rules and for a graph $G=(V,E)$ there are $2^{|V|}$ possible colorings (configurations). Therefore, by starting from whatever initial configuration the process must reach a cycle of configurations eventually. The length of this cycle and the number of rounds the process needs to reach the cycle are respectively called the \emph{period} and the \emph{consensus time} of the process. In $r$-BP, the periodicity is always one and the consensus time is bounded by $|V|-1$, which is tight (for instance consider a path $P_n$ and $1$-BP, where initially all nodes are white except one of the leaves). In two-way $r$-BP, $2^{|V|}$ is a trivial upper bound on both period and consensus time. However, interestingly Goles and Olivos~\cite{goles1980periodic} proved that the period is always one or two and Fogelman, Goles, and Weisbuch~\cite{fogelman1983transient} showed that the consensus time is bounded by $\mathcal{O}(|E|)$, which is tight (consider a cycle $C_n$ which is fully white except two adjacent black nodes for $r=1$).

Arguably, the most well-studied question concerning the behavior of these models is: What is the minimum $p$ for which the process becomes fully black with probability approaching one? This question has been investigated on different classes of graphs like hypercube~\cite{balogh2006bootstrap}, the binomial random graph~\cite{janson2012bootstrap,chang2013bounding}, random regular graphs~\cite{balogh2007bootstrap,gartner2017majority,mossel2014majority}, infinite trees~\cite{kanoria2011majority}, and many others. A substantial amount of attention has been devoted to address this question on the $d$-dimensional torus, due to the study of certain interacting particle systems like fluid flow in rocks~\cite{adler1988diffusion}, dynamics of glasses~\cite{garrahan2011kinetically}, and biological interactions~\cite{molofsky1999local}. The $d-$dimensional torus $\mathbb{T}_L^d$ is the graph with node set $[L]^d:=\{1,\cdots,L\}^d$, where two nodes are adjacent if and only if they differ by $1$ or $L-1$ in exactly one coordinate. Notice we always assume that $d$ is a constant while let $L$ tend to infinity.

The aforementioned question regarding $r$-BP on the $d$-dimensional torus $\mathbb{T}_L^d$ first was considered by Aizenman and Lebowitz~\cite{aizenman1988metastability}, who proved that for $r=d=2$ the process exhibits a \emph{weak threshold behavior} at $\mathscr{P}_1:=(\log_{(r-1)}L)^{-(d-r+1)}$ where $\log_{(r)}L:=\log \log_{(r-1)}L$ for $r\geq 1$ and $\log_0 L=L$. That is, the process becomes fully black for $p\gg \mathscr{P}_1$ and it does not for $p\ll \mathscr{P}_1$ asymptotically almost surely~\footnote{For a graph $G=(V,E)$ we say an event happens asymptotically
almost surely (a.a.s.) if it happens with probability $1-o(1)$ as $|V|$ tends to infinity.}, where we shortly write $f\ll g$ instead of $f=o(g)$ for two functions $f(L),g(L)$. Cerf and Cirillo~\cite{cerf1999finite} made one step further by proving that the process exhibits a similar weak threshold behavior at $\mathscr{P}_1$ for $d=r=3$. Finally, Cerf and Manzo~\cite{cerf2002threshold} extended this result to all values of $1\le r\le d$, building on the work by Schonmann~\cite{schonmann1992behavior}. Later on, it was proven, by Holroyd~\cite{holroyd2003sharp}, that for $d=r=2$ the process actually exhibits a \emph{sharp threshold behavior}; that is, the process a.a.s. becomes fully black if $p\ge (1+\epsilon)\lambda\mathscr{P}_1$ and does not if $p\le (1-\epsilon)\lambda\mathscr{P}_1$  for any constant $\epsilon>0$, where $\lambda(d,r)>0$ is a constant. This sharp threshold behavior was proven for the case of $d=r=3$ by Balogh, Bollobas, and Morris~\cite{balogh2009bootstrap} and finally for all values of $1\le r\le d$ by Balogh, Bollobas, Duminil-Copin, and Morris~\cite{balogh2012sharp}. Along the way, as an intermediate step the behavior of a similar process was also studied. In \emph{modified $r$-bootstrap percolation} on $\mathbb{T}_L^d$, by starting from a random initial configuration in every round each white node becomes black if it has black neighbor(s) in at least $r$ distinct dimensions and black nodes remain unchanged. See~\cite{holroyd2006metastability,holroyd2003sharp} by Holroyd regarding the sharp threshold behavior of modified $r$-BP for $r=d$.

For two-way $r$-BP on $\mathbb{T}_L^d$ Schonmann~\cite{schonmann1990finite}, by applying the results from~\cite{aizenman1988metastability}, proved that for $d=r=2$ the process becomes fully black if $p\gg 1/\sqrt{\log L}$ and it does not if $p\ll 1/\sqrt{\log L}$ a.a.s., i.e., it exhibits a weak threshold behavior. What about the higher dimensions? Despite several attempts~\cite{coker2014sharp,molofsky1999local,balister2010random,schonmann1990finite} over the last three decades, this question has remained open. Intuitively speaking, the inherent difficulty of analyzing two-way $r$-BP comes from the fact that unlike $r$-BP, in two-way $r$-BP a node may switch between two colors back and forth.

By providing several new techniques, using some ideas inspired from \cite{schonmann1990finite,gartner2017biased} (where the special case of $r=d=2$ is handled), and applying some prior results regarding (modified) $r$-BP~\cite{cerf2002threshold,balogh2012sharp,holroyd2006metastability}, we extend the above threshold behavior in two-way $r$-BP to all dimensions. 

One might relax the above question and asks what is the minimum $p$ for which black color survives (but does not make the whole graph black necessarily). In $r$-BP on $\mathbb{T}_L^d$ as will be discussed, it is straightforward to show that black color survives forever if $p\gg \mathscr{P}_2$ and it does not if $p\ll \mathscr{P}_2$ a.a.s. for $\mathscr{P}_2:=L^{-d}$. The answer to this question for two-way $r$-BP is somewhat more involved. We prove a similar threshold behavior in the two-way setting, which leads into some interesting insights regarding the behavior of the process.

All in all, we prove two-way $r$-BP on the $d$-dimensional torus $\mathbb{T}_L^d$ exhibits two phase transitions. More precisely, asymptotically almost surely 
\begin{itemize}
\item the process becomes fully white if $p\ll \mathscr{P}_2^{1/2^{r-1}}$: Phase 1
\item both colors survive if $\mathscr{P}_2^{1/2^{r-1}} \ll p \ll \mathscr{P}_1^{1/2^{r-1}}$: Phase 2
\item the process becomes fully black if $\mathscr{P}_1^{1/2^{r-1}}\ll p$: Phase 3.
\end{itemize}
\vspace{-0.25 cm}
\begin{figure}[h]
\begin{center}
\includegraphics[width=0.5	\textwidth]{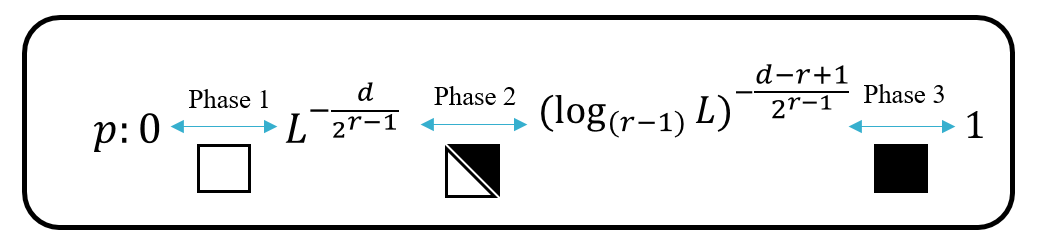}
\caption{\centering Two phase transitions in two-way $r$-BP on $\mathbb{T}_L^d$.\label{fig 1}}
\end{center}
\end{figure}
\vspace{-0.5 cm}

After setting up some basic definitions in Section~\ref{definitions}, we provide some insights and the main ideas behind our proof techniques in Section~\ref{proof technique}.
Finally in Section~\ref{two phase transition}, our main results regarding two phase transitions of two-way $r$-BP on the $d$-dimensional torus are provided.
\vspace{-0.25 cm}
\subsection{Definitions and Preliminaries}
\vspace{-0.1 cm}
\label{definitions}
Let for a graph $G=(V,E)$ and a node $v\in V$, the \textit{neighborhood} of $v$ be $N(v):=\{u\in V: \{v,u\}\in E\}$. For a set $S\subseteq V$ we have $N_S(v):=N(v) \cap S$ and $N(S):=\bigcup_{v\in S}N(v)$. Furthermore, for two nodes $v,u\in V$ we define the \emph{distance} $d(v,u)$ to be the length of a shortest path between $v$ and $u$, in terms of the number of edges. Let $G^2$ be the second power of graph $G$, where two nodes are adjacent if their distance in $G$ is at most 2. Then, we say there is a \emph{semi-connected} path between $v$ and $u$ in $G$ if there is a path between them in $G^2$.

Formally, a \emph{configuration} is a function $\mathcal{C}:V\rightarrow\{b,w\}$, where $b$,$w$ stand for black and white. For a configuration $\mathcal{C}$, node $v\in V$, and color $c\in\{b,w\}$, we define $N^{\mathcal{C}}_c(v) := \{u\in N(v): \mathcal{C}(u) = c\}$ which is the set of neighbors of $v$ which have color $c$ in configuration $\mathcal{C}$. Finally, for a color $c\in \{b,w\}$ and a set $S\subseteq V$, we write $\mathcal{C}|_S=c$ if $\forall v\in S$, $\mathcal{C}(v)=c$. 

Let us define (two-way) $r$-BP formally. Consider a graph $G=(V,E)$ and an initial random configuration $\mathcal{C}_0$. In two-way $r$-BP, $\mathcal{C}_t(v)=b$ if $|N^{\mathcal{C}_{t-1}}_b(v)|\geq r$ and $\mathcal{C}_t(v)=w$ otherwise for $t\geq 1$, where $\mathcal{C}_t$ is the $t$-th configuration. In $r$-BP, $\mathcal{C}_t(v)=b$ if $|N^{\mathcal{C}_{t-1}}_b(v)|\geq r$ or $\mathcal{C}_{t-1}=b$ and $\mathcal{C}_t(v)=w$ otherwise.

For a graph $G$ and two configurations $\mathcal{C}$ and $\mathcal{C}'$ we write $\mathcal{C}\leq \mathcal{C'}$ if all black nodes in $\mathcal{C}$ are also black in $\mathcal{C}'$. A model $M_1$ is \emph{stronger} than model $M_2$ if for any graph $G$ and any configuration $\mathcal{C}$, we have $M_2(\mathcal{C})\leq M_1(\mathcal{C})$ where $M_1(\mathcal{C})$ and $M_2(\mathcal{C})$ denote the configuration obtained from $\mathcal{C}$ after one round of $M_1$ and $M_2$. For instance, $r$-BP is stronger than two-way $r$-BP. Furthermore, $M$ is a \emph{monotone} model if for any graph $G$ and any two configurations $\mathcal{C}_1\leq \mathcal{C}_2$, we have $\mathcal{C}^{\prime}_1\leq \mathcal{C}_{2}^{\prime}$ where $\mathcal{{C}}^{\prime}_1$ and $\mathcal{C}^{\prime}_2$ are the configurations obtained respectively from $\mathcal{C}_1$ and $\mathcal{C}_2$ after one round of $M$. All models introduced in this paper are monotone. 
 
For any model $M$ and a graph $G=(V,E)$, a set $S\subseteq V$ is called a \emph{c-robust set} for $c\in\{b,w\}$ whenever the following holds: if all nodes in $S$ share color $c$ in some configuration during the process, then they will all keep it in all upcoming configurations. Furthermore, a set $S\subseteq V$ is \emph{c-eternal} for $c\in\{b,w\}$ means if all nodes in $S$ have color $c$ in some configuration, then color $c$ \emph{survives}, that is for any upcoming configuration there is a node which has color $c$. Clearly, a $c$-robust set is also a $c$-eternal set, but not necessarily the other way around. Furthermore, a \emph{$c$-dynamo} for $c\in\{b,w\}$ is a subset of nodes which \emph{takes over} if they share color $c$, meaning the whole graph will have color $c$ after some rounds. For example in any connected graph and two-way $1$-BP, any two adjacent nodes are a $b$-dynamo. Notice one node might not suffice, for instance in an even cycle. For some graph

For some graph $G$ and an integer $r\ge 1$, we say a node set $S$ is $(r,c)$-robust (analogously, $(r,c)$-eternal, $(r,c)$-dynamo) if it is $c$-robust (resp. $c$-eternal, $c$-dynamo) in two-way $r$-BP on $G$. Observe that in an $(r,b)$-robust set (analogously $(r,w)$-robust set) for each node $v\in S$, $|N_S(v)|\geq r$ (resp. $|N_{V\setminus S}(v)|<r$). 

The $d$-dimensional torus $\mathbb{T}_L^d$ is the graph with the node set $V=\{(i_1,\cdots,i_d): 1\le i_1,\cdots,i_d\le L)\}$
and the edge set $E= \{\{i,i'\}:|i_j-i^{\prime}_j|=1, L-1 \ \textrm{for some}\  j \ \textrm{and}\ i_k=i^{\prime}_k \ \forall k\ne j\}$. Notice each node in $\mathbb{T}_L^d$ has $2d$ neighbors, two neighbors in each dimension. For a node $v=(i_1,\cdots,i_d)$ and $1\leq j\leq d$, we call $(i_1,\cdots, i_j+1,\cdots,i_d)$ and $(i_1,\cdots, i_j-1,\cdots,i_d)$ the neighbors of $v$ in the $j$-th dimension. (For the above definition to make sense when $i_j$ is equal to $1$ or $L$, we need to apply the modulo $L$ operation. However to lighten the notation, we skip that whenever it is clear from the context.)

The \emph{hyper-rectangle} of size $l_1 \times \cdots \times l_d$ starting from node $(i_1, \cdots, i_d)$ is the node set $\{(i_1^{\prime},\cdots,i_d^{\prime}):i_j\leq i_j^{\prime}\leq i_j+l_j \ \forall 1\leq j\leq d\}$. An $r$-dimensional \emph{hyper-square} $HS$ starting at node $i$ is a hyper-rectangle starting at $i$ with exactly $r$ of $l_j$s being equal to 1 and the rest 0, where we define $J_{HS}:=\{j:l_j\ne 0\}$. We denote the \emph{odd-part} (analogously \emph{even-part}) of $HS$ by $HS^{(1)}$ (resp. $HS^{(2)}$), which are the nodes that differ in odd (resp. even) number of coordinates with $i$. As a warm-up let us prove the following simple, however crucial, lemma.
\vspace{-0.15 cm}
\begin{lemma}
\label{lemma 4}
For an $r$-dimensional hyper-square $HS$ in $\mathbb{T}_L^d=(V,E)$, $HS^{(1)}$ and $HS^{(2)}$ are $(r,b)$-eternal sets.
\end{lemma}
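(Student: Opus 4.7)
The proof plan rests on a single clean combinatorial observation: inside an $r$-dimensional hyper-square $HS$, each node has exactly $r$ neighbors in $HS$, all of the opposite parity. More precisely, fix $v\in HS$; the neighbors of $v$ in the torus that also lie in $HS$ are obtained by flipping one coordinate $j\in J_{HS}$ between $i_j$ and $i_j+1$ (flipping a coordinate outside $J_{HS}$ leaves $HS$). There are exactly $|J_{HS}|=r$ such neighbors, and each of them differs from $i$ in exactly one more or one fewer coordinate than $v$ does, hence sits in the opposite parity class. Consequently, every $v\in HS^{(1)}$ has exactly $r$ neighbors in $HS^{(2)}$, and symmetrically every $u\in HS^{(2)}$ has exactly $r$ neighbors in $HS^{(1)}$.

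Given this, I would prove the lemma by a short alternating induction. Suppose $\mathcal{C}_t|_{HS^{(1)}}=b$. Then for every $u\in HS^{(2)}$, all $r$ of its $HS^{(1)}$-neighbors are black in $\mathcal{C}_t$, so $|N^{\mathcal{C}_t}_b(u)|\ge r$ and hence $\mathcal{C}_{t+1}(u)=b$; thus $\mathcal{C}_{t+1}|_{HS^{(2)}}=b$. Applying the same reasoning with the roles of $HS^{(1)}$ and $HS^{(2)}$ swapped gives $\mathcal{C}_{t+2}|_{HS^{(1)}}=b$. By induction, for every $k\ge 0$ the set $HS^{(1)}$ is fully black at time $t+2k$ and $HS^{(2)}$ is fully black at time $t+2k+1$. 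In particular, there is always a black node, so $HS^{(1)}$ is $(r,b)$-eternal. The argument for $HS^{(2)}$ is identical after shifting the parity by one step.

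There is essentially no obstacle here: the whole content of the lemma is the counting claim that an $r$-dimensional hyper-square induces an $r$-regular bipartite subgraph (an $r$-cube) with bipartition $(HS^{(1)},HS^{(2)})$, after which the two-way threshold dynamics trivially ping-pongs between the two sides. The only subtlety worth mentioning in the write-up is that the lemma asserts eternality, not robustness: the $r$ neighbors guaranteed inside $HS$ are sufficient for survival, even though nodes of $HS^{(i)}$ may temporarily turn white when the opposite part is the currently-black part, and black neighbors outside $HS$ could in principle interfere but can only help, since $r$ in-$HS$ black neighbors already meet the threshold.
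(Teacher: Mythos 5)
Your proposal is correct and follows essentially the same route as the paper: the counting claim that every node of $HS$ has exactly $r$ neighbors inside $HS$, all in the opposite parity class (obtained by flipping one coordinate in $J_{HS}$), followed by the alternating ping-pong argument showing $HS^{(1)}$ and $HS^{(2)}$ are black in alternating rounds. Your remark that outside black neighbors can only help (by monotonicity of the threshold rule) and that the claim is eternality rather than robustness is a sound and welcome clarification.
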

\vspace{-0.4 cm}
\begin{proof}
It suffices to show each node in $HS^{(1)}$ has exactly $r$ neighbors in $HS^{(2)}$ and vice versa because it implies that if $\mathcal{C}_t|_{HS^{(1)}}=b$, we will have $\mathcal{C}_{t+2t'+1}|_{HS^{(2)}}=b$ and $\mathcal{C}_{t+2t'}|_{HS^{(1)}}=b$ for any $t'\geq 0$ (a similar argument for $\mathcal{C}_t|_{HS^{(2)}}=b$). Let $i'$ be a node in $HS^{(1)}$ and assume $HS$ starts at node $i$. There is an odd-size subset $J\subseteq J_{HS}$ of coordinates in which $i'$ is larger than $i$ by one. Now, by decrementing any coordinate in $J$ or incrementing any coordinate in $J_{HS}\setminus J$, we reach a node in $HS^{(2)}$ which is a neighbor of $i'$. Thus, $i'$ has $r$ neighbors in $HS^{(2)}$. The proof of the other direction is analogous. See Figure~\ref{fig 2} for an example.
\end{proof}
\vspace{-0.5 cm}
\begin{figure}[h]
\begin{center}
\includegraphics[width=0.6	\textwidth]{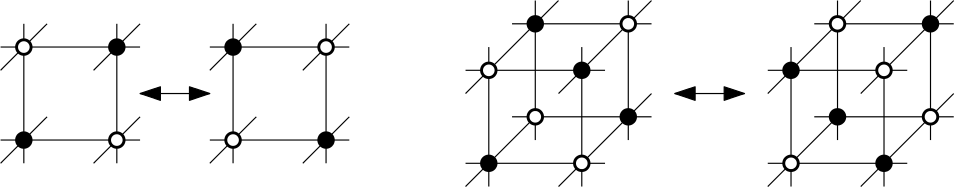}
\caption{\centering(left) A $(2,b)$-eternal set (right) a $(3,b)$-eternal set.\label{fig 2}}
\end{center}
\vspace{-0.5 cm}
\end{figure}
Notice the even/odd-part of an $r$-dimensional hyper-square is of size $2^{r-1}$ which implies that there exists an $(r,b)$-eternal set of size $2^{r-1}$. Furthermore, we always assume $2\leq r\leq d$. The setting of $d+1\leq r\leq 2d$ is the same as $1\leq r\leq d$ if we swap black and white. The case of $r=1$ is trivial since any two adjacent black nodes make the whole graph black.

In the $r$-dimensional torus $\mathbb{T}_L^r$, the set of $r$-dimensional hyper-squares whose starting node is in $\{(2i_1-1,\cdots,2i_r-1):1\leq i_1,\cdots,i_r\leq \lfloor L/2\rfloor\}$ divide the node set (except the nodes with value $L$ in one of their coordinates if $L$ is odd) into $\lfloor L/2\rfloor^r$ pair-wise disjoint hyper-squares. Furthermore, if we divide the nodes in the $d$-dimensional torus $\mathbb{{T}}_L^d$ into $L^{d-r}$ pair-wise disjoint subsets according to their last $d-r$ coordinates, the induced subgraph by each of these subsets is an $r$-dimensional torus. Now, if we partition the node set of each of these $r$-dimensional tori into $\lfloor L/2\rfloor^r$ hyper-squares as above, we will have $L^{d-r}\lfloor L/2\rfloor^r=\Theta(L^d)$ pair-wise disjoint $r$-dimensional hyper-squares. We call this procedure the \emph{tiling} of $\mathbb{T}_L^d$ into $r$-dimensional hyper-squares.
\vspace{-0.3 cm}
\subsection{Proof Techniques and Some Insights}
\vspace{-0.1 cm}
\label{proof technique} 
\textbf{Phase transition.} Intuitively speaking, one might expect any monotone model to exhibit some sort of threshold behavior with two phase transitions on any graph. Assume that the initial probability $p$ is very close to zero, then black nodes probably disappear in a few number of rounds. However, if we gradually increase the initial probability, at some point it would suffice to guarantee the survival of black color, however perhaps it is not high enough to result in a fully black configuration. Finally, if we keep increasing the initial probability, suddenly it should be sufficient to guarantee not only the survival of black color but also the disappearance of white color. Another way of seeing these two phase transitions is in terms of $b$-eternal set and $b$-dynamo. One might think of the first threshold as the threshold value for having a fully black $b$-eternal set and the second one as the threshold for having a fully black $b$-dynamo since black color survives if and only if there is a black $b$-eternal set initially and it will take over if and only if there is a black $b$-dynamo in the initial configuration. Notice the first and the second threshold values might match, which means the process goes actually through one phase transition; for instance, in $1$-BP the existence of a black node is the necessary condition for survival of black color and at the same time sufficient condition to take over the whole graph; i.e., any $b$-eternal set is also a $b$-dynamo. Although this threshold behavior might seem conceptually simple, identifying the exact threshold values is usually a very non-trivial task. As we discussed even for the very special case of $r$-BP on $\mathbb{T}_L^d$ the answer was known after a large series of papers over more than three decades.

As discussed, $r$-BP on the $d$-dimensional torus $\mathbb{T}_L^d$ goes through two phase transitions, which matches our intuitive argument from above. More precisely, the torus becomes fully white if $p\ll \mathscr{P}_2$, both color coexist if $\mathscr{P}_2\ll p\ll \mathscr{P}_1$, and it will become fully white if $\mathscr{P}_1\ll p$ a.s.s. where $\mathscr{P}_1=(\log_{(r-1)}L)^{-(d-r+1)}$ and $\mathscr{P}_2=L^{-d}$. The first transition has not been considered before, but it is very easy to handle. For $p\ll \mathscr{P}_2$, by a simple union bound the probability that there exists a black node in the initial configuration is upper-bounded by $L^d\cdot o(\mathscr{P}_2)=o(1)$, which implies a.a.s. the initial configuration is fully white. For $p\gg \mathscr{P}_2$, the expected number of black nodes in the initial configuration is equal to $L^d\cdot \omega(\mathscr{P}_2)=\omega(1)$; applying Chernoff bound~\cite{dubhashi2009concentration} yields that a.a.s. there exists a black node initially, which guarantees the survival of black color.

We prove that two-way $r$-BP on $\mathbb{T}_L^d$ exhibits a similar threshold behavior at threshold values $\mathscr{P}_1^{1/2^{r-1}}$ and $\mathscr{P}_2^{1/2^{r-1}}$. As mentioned, Balogh, Bollobas, Duminil-Copin, and Morris~\cite{balogh2012sharp} proved that $r$-BP actually exhibits a sharp threshold behavior in the second transition. Can we expect a sharp threshold in the first transition of $r$-BP or any of the two transitions of two-way $r$-BP? We believe that it might be the case in the second phase transition of two-way $r$-BP, but proving such a statement probably requires novel ideas beyond the known techniques in the literature. On the other hand, the claimed weak threshold behavior is the best possible for the first phase transition in both $r$-BP and two-way $r$-BP. (See the appendix, Section~\ref{no sharp}, for a simple proof of this claim.)   

\textbf{A more general statement.} Recall in Lemma~\ref{lemma 4} we proved that in two-way $r$-BP on $\mathbb{T}_L^d$ there is an $(r,b)$-eternal set of size $2^{r-1}$. In Lemma~\ref{lemma 1} we will show that actually there is no smaller $(r,b)$-eternal set. Therefore, by switching from $r$-BP to two-way $r$-BP, the minimum size of a $b$-eternal set increases from 1 to $2^{r-1}$. Thus, the threshold values in both $r$-BP and two-way $r$-BP are equal to $\mathscr{P}_1^{1/s}$ and $\mathscr{P}_2^{1/s}$, where $s$ is the minimum size of a $b$-eternal set. We believe that there exists a large class of monotone models $\mathscr{M}$ such that each model $M\in\mathscr{M}$ on $\mathbb{T}_L^d$ goes through two phase transitions at threshold values $\mathscr{P}_1^{1/s}$ and $\mathscr{P}_2^{1/s}$, where $s$ is the minimum size of a $b$-eternal set. We do not prove such a statement, but in Section~\ref{future work} we illustrate how our proof techniques can possibly be applied to provide such results. For now, let us discuss an interesting example which falls under the umbrella of the above argument. As an intermediate step from the analysis of $r$-BP to the analysis of two-way $r$-BP, Coker and Gunderson~\cite{coker2014sharp} studied the following variant of bootstrap percolation on $\mathbb{T}_L^2$, which is called \emph{2-BP with recovery}. In this model, a white node becomes black if it has at least two black neighbors, and a black node remains unchanged, except if all its four neighbors are white. Coker and Gunderson~\cite{coker2014sharp} proved that this process exhibits two phase transitions at $\mathscr{P}_1^{1/2}$ and $\mathscr{P}_2^{1/2}$. Notice this is consistent with the above claim because in $2$-BP with recovery the minimum size of a $b$-eternal set is 2. Clearly, one black node disappears in one round but two adjacent black nodes survive forever.

\textbf{Proof techniques.} Now, we discuss the high-level ideas of the proof techniques applied. In phase one, we want to show that if $p\ll \mathscr{P}_2^{1/2^{r-1}}$ then black color disappears a.a.s. We exploit a technique which we call \emph{clustering}; roughly speaking, we show $p$ is so small that a.a.s. one can partition all black nodes in small clusters which are far from each other. This distance lets us treat each cluster independently since there is no interaction among them. Furthermore, the number of black nodes in each cluster is less than $2^{r-1}$, that is the minimum size of an $(r,b)$-eternal set, which then results in the disappearance of black color. 

For the second phase, we must show both colors survive a.a.s. For black color, since there are $\Theta(L^d)$ pair-wise disjoint $(r,b)$-eternal sets of size $2^{r-1}$ (namely the even-part of $\Theta(L^d)$ pair-wise disjoint $r$-dimensional hyper-squares), applying Chernoff bound implies that there is a black $(r,b)$-eternal set initially a.a.s. This guarantees the survival of black color. We also need to show for $p\ll \mathscr{P}_1^{1/2^{r-1}}$ white color survives a.a.s. For that, we rely on the threshold behavior of $r$-BP. More precisely, applying the fact that the minimum size of a $b$-eternal set is equal to $2^{r-1}$, we show the probability that an arbitrary node is black after $T$ rounds, for some constant $T(d,r)$, is $o(\mathscr{P}_1)$. Since the stronger model of $r$-BP results in the survival of white color a.a.s. in this case, so does two-way $r$-BP. (Some details are removed.)

In phase 3, our goal is to prove if $\mathscr{P}_1^{1/2^{r-1}}\ll p$, the process a.a.s. becomes fully black. We utilize a method, which we call \emph{scaling}. The idea is to tile the torus $\mathbb{T}_L^d$ into $r$-dimensional hyper-squares and treat each of the hyper-squares as a single node. We say two hyper-squares are neighbors if there is at least one edge between them; then, each hyper-square has $2d$ neighbors, two in each dimension. Furthermore, we say a hyper-square is \emph{occupied} in configuration $\mathcal{C}_t$ for even $t$ (analogously odd $t$) if its even-part (resp. odd-part) is black. We prove if in some configuration, a hyper-square has occupied neighbors in $r$ distinct dimensions, then it becomes occupied in constantly many rounds. Furthermore, each hyper-square is occupied initially with probability $p^{2^{r-1}}\gg \mathscr{P}_1$. Hence, the process scaled to the hyper-squares is at least as strong as modified $r$-BP, where initially each hyper-square is occupied with probability $\omega(\mathscr{P}_1)$. We know modified $r$-BP with initial probability $\omega(\mathscr{P}_1)$ results in fully black configuration a.a.s. This implies that two-way $r$-BP on $\mathbb{T}_L^d$ reaches a configuration where the even-part of each of the hyper-squares is black a.a.s. We can do the same argument by switching the terms of odd and even in the definition of occupation. Then, by a union bound, a.a.s. the process becomes fully black. 

 
\textbf{Tie-breaking rule.} Let us finish this section, by mentioning an interesting observation. Another well-studied model in this literature is the \emph{majority model}, where by starting from an initial random configuration in each round all nodes update their color to the most frequent color in their neighborhood, and in case of a tie, a node keeps its current color. Two-way $r$-BP on $\mathbb{T}_L^d$ for $r=d$ is sometimes called the \emph{biased majority model} because each node selects the most frequent color in its neighborhood and in case of a tie, it chooses black (notice each node has degree $2d$). Therefore, the majority model on $\mathbb{T}_L^d$ is the same as the biased variant except in tie-breaking rule. We claim in the majority model if $p\leq 1-\delta$ for any arbitrary constant $\delta>0$, then the process does not become fully black a.a.s. For a simple proof see the appendix, Section~\ref{majority}. On the other hand as we discussed, in the biased model $p\gg \mathscr{P}_1^{1/2^{d-1}}$, consequently $p\geq \delta$ for an arbitrarily small constant $\delta>0$, results in fully black configuration a.a.s. Putting these two propositions in parallel, we observe that in the majority model $p$ should be very close to 1 to have a high chance of final complete occupancy by black, but by just changing the tie-breaking rule in favor of black, the process ends up in fully black configuration a.a.s. even for initial probability very close to 0. This comparison illustrates how small alternations in local behavior can result in considerable changes in the global behavior.
\vspace{-0.3 cm}
\section{Two Phase Transitions}
\label{two phase transition}
\subsection{Phase 1}
\label{phase 1}
The idea of the proof is to show that if $p\ll \mathscr{P}_2^{1/2^{r-1}}$, then a.a.s. black nodes in $\mathcal{C}_0$ are contained in a group of hyper-rectangles which are sufficiently far from each other and each hyper-rectangle includes less than $2^{r-1}$ black nodes. Since the hyper-rectangles are far from each other, the nodes out of the hyper-rectangles, which are all white initially, stay white forever (i.e., create a white $(r,w)$-robust set). Furthermore, the black nodes inside each hyper-rectangle die out after some rounds because they are less than the minimum size of an $(r,b)$-eternal set. We prove our claim in Theorem~\ref{theorem 1}, building on Lemma~\ref{lemma 1}. To prove Lemma~\ref{lemma 1}, we need to apply Lemma~\ref{lemma 3}, whose proof is given in the appendix, Section~\ref{lemma 3 proof}. 
\vspace{-0.2 cm}
\begin{lemma}
\label{lemma 3}
In $\mathbb{T}_L^d=(V,E)$, a non-empty $(r,b)$-robust set intersects at least $2^{r-1}$ pair-wise disjoint $(r,w)$-robust sets.
\end{lemma}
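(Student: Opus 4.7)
My plan is to explicitly exhibit a family of $2^{r-1}$ pairwise disjoint $(r,w)$-robust sets partitioning $V$, and then use the $(r,b)$-robustness of $S$ to show that some choice of such a partition has $S$ meeting every piece.

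For an $(r-1)$-subset of coordinates $J\subseteq [d]$ and a shift $\vec{s}=(s_k)_{k\in J}\in\mathbb{Z}_L^J$, define the color classes
\[
T_{J,\vec{s},\epsilon}=\bigl\{v\in V:\lfloor (v_k+s_k)/2\rfloor\equiv\epsilon_k\pmod{2}\ \text{for all}\ k\in J\bigr\},\qquad \epsilon\in\{0,1\}^{r-1}.
\]
A direct count shows that in each coordinate $k\in J$, exactly one of $v$'s two $e_k$-neighbors lies in a different class (the one that crosses the parity transition of $\lfloor(v_k+s_k)/2\rfloor\bmod 2$), while for $k\notin J$ both neighbors stay in the same class. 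Hence every $v\in T_{J,\vec{s},\epsilon}$ has exactly $r-1$ out-of-class neighbors, so $T_{J,\vec{s},\epsilon}$ is $(r,w)$-robust. The $2^{r-1}$ classes are pairwise disjoint by construction and partition $V$.

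It then suffices to choose $J$ and $\vec{s}$ so that every class meets $S$. I would do this by induction on $r$. For the base case $r=2$, the induced minimum degree $\ge 2$ forces $|S|\ge 3$, so $S$ takes at least two distinct values in some coordinate $k$; a shift $s_k$ placing those two values on opposite sides of a parity transition puts a member of $S$ in each of the two resulting classes. For the inductive step $r\ge 3$, pick any coordinate $k$ and consider the thickened slice $\tilde{S}_c=S\cap\{v_k\in\{c,c+1\}\}$; each $v\in\tilde{S}_c$ loses at most one in-$S$-neighbor by restricting to the slab (only the single $k$-neighbor outside $\{c,c+1\}$), and by combining the horizontal in-$S$-neighbors from both layers $v_k=c$ and $v_k=c+1$, the projection of $\tilde{S}_c$ onto $\mathbb{T}_L^{d-1}$ is $(r-1,b)$-robust. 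The inductive hypothesis yields $2^{r-2}$ disjoint $(r-1,w)$-robust classes meeting this projection; lifting them back by attaching the slab constraint $v_k\in\{c,c+1\}$, and repeating for a second slab $v_k\in\{c',c'+1\}$ with $|c-c'|\ge 2$, produces the required $2\cdot 2^{r-2}=2^{r-1}$ disjoint $(r,w)$-robust sets meeting $S$.

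The main obstacle I anticipate is the projection step in the inductive argument: a $k$-direction in-$S$-neighbor of $v=(\hat u,c)$ projects to $\hat u$ itself rather than to a neighbor of $\hat u$, so naive projection of a single layer only gives $(r-2,b)$-robustness. I would close the gap by exploiting both layers jointly---when both $(\hat u,c)$ and $(\hat u,c+1)$ lie in $S$, their horizontal in-$S$-neighbors together supply $\hat u$ with enough in-projection neighbors, and the remaining cases can be handled by choosing $k$ so that $S$ genuinely straddles the slab in this coordinate. Carrying this bookkeeping out carefully, and verifying that the slab-lifted classes inherit $(r,w)$-robustness in $\mathbb{T}_L^d$, is where most of the effort will go.
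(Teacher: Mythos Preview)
Your overall plan—induct on $r$, slice in one coordinate, apply the hypothesis to the intersection, and lift the resulting $(r-1,w)$-robust sets back by thickening—does match the paper's scheme. But the projection step you flag as ``the main obstacle'' is a real gap, and your proposed fix does not close it.

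Concretely, take $d=3$, $r=3$, and $S=\{(0,0,z),(1,0,z):z\in[L]\}$. Every node of $S$ has exactly three $S$-neighbours (one horizontal and two vertical), so $S$ is $(3,b)$-robust. For \emph{any} slab $\{v_3\in\{c,c+1\}\}$ the set $\tilde S_c$ consists of four points, and its projection to $\mathbb{T}_L^{2}$ is $\{(0,0),(1,0)\}$, in which each point has only one neighbour. So the projection is not $(2,b)$-robust, even after ``combining the horizontal in-$S$-neighbours from both layers.'' Choosing a different coordinate $k$ does not save you either: the only directions in which slabs give a good projection are $k=1$ or $k=2$, but then $S$ lies entirely in a single slab, and there is no disjoint second slab meeting $S$ to produce the other $2^{r-2}$ sets.

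The paper's key idea, which your sketch is missing, is not to take a slab but to find a \emph{single} level $L_{k_1}$ that meets $S$ while the adjacent level $L_{k_1+1}$ does \emph{not}. Then for $v\in S\cap L_{k_1}$ the $k$-neighbour in $L_{k_1+1}$ was never in $S$, so at most one of $v$'s $S$-neighbours (the one in $L_{k_1-1}$) is lost upon restricting to $L_{k_1}$; hence $S\cap L_{k_1}$ is honestly $(r-1,b)$-robust in the $(d-1)$-dimensional torus on that level, and the induction applies cleanly. One then finds a second such boundary level $L_{k_2}$ (with $L_{k_2-1}\cap S=\emptyset$) far enough away, and lifts each family of $2^{r-2}$ sets by pairing $L_{k_i}$ with its empty neighbour. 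The paper also handles the residual case where no such boundary pair exists: then $S$ meets $\Theta(L)$ levels, and the thickness-two slabs $L_{2k-1}\cup L_{2k}$ already give $\Theta(L)\ge 2^{r-1}$ pairwise disjoint $(r,w)$-robust sets meeting $S$.

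A minor separate point: your explicit classes $T_{J,\vec s,\epsilon}$ use $\lfloor(\cdot)/2\rfloor\bmod 2$, which has period $4$; on $\mathbb{T}_L^d$ this is only well-defined when $4\mid L$, so even the first paragraph needs adjustment for general $L$.
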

\vspace{-0.4 cm}
\begin{lemma}
\label{lemma 1}
In two-way $r$-BP on $\mathbb{T}_L^d$, a configuration with less than $2^{r-1}$ black nodes becomes fully white in $T$ rounds for some constant $T(d,r)$.
\end{lemma}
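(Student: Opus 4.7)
The plan is to combine the eventual periodicity of the two-way $r$-BP dynamics, confined to a bounded region, with Lemma~\ref{lemma 3} and a pigeonhole argument. Starting from $\mathcal{C}_0$ with $B_0$ black and $|B_0| < 2^{r-1}$, let $A$ denote the $r$-BP closure of $B_0$. By a standard bootstrap-percolation rectangle-hull / perimeter argument, $|A| \le g(r,d)$ for some constant $g(r,d)$ independent of $L$, since $|B_0|$ lies below the percolation threshold of $r$-BP on $\mathbb{T}_L^d$. Because $r$-BP is monotone and stronger than two-way $r$-BP, a routine induction gives that the two-way black set $B_t \subseteq A$ at every $t$; moreover every $v \notin A$ has $|N_A(v)| < r$ by definition of closure, so $v$ can never acquire $r$ black neighbours and stays white forever.

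The two-way dynamics on $A$ (with $V \setminus A$ perpetually white) is therefore a deterministic system on at most $2^{|A|} \le 2^{g(r,d)}$ states, so within $T := 2^{g(r,d)}$ rounds it enters a periodic cycle; by Goles--Olivos the period is $1$ or $2$. Let $B^{*}, B^{**} \subseteq A$ be the two black sets of the cycle (with $B^{*}=B^{**}$ in the period-$1$ case). Suppose for contradiction that black survives, i.e., $B^{*} \cup B^{**} \neq \emptyset$. Then $B^{*} \cup B^{**}$ is $(r,b)$-robust: any $v \in B^{*}$ must satisfy $|N_{B^{**}}(v)| \ge r$ (otherwise the one-step rule would turn it white at the next step), so $|N_{B^{*} \cup B^{**}}(v)| \ge r$; the case $v \in B^{**}$ is symmetric.

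Now apply Lemma~\ref{lemma 3} to the non-empty $(r,b)$-robust set $B^{*} \cup B^{**}$: it supplies $2^{r-1}$ pairwise disjoint $(r,w)$-robust sets $W_1,\dots,W_{2^{r-1}}$, each intersecting $B^{*} \cup B^{**}$. Since $|B_0| < 2^{r-1}$ and the $W_i$ are pairwise disjoint, by pigeonhole some $W_{i^{*}}$ satisfies $W_{i^{*}} \cap B_0 = \emptyset$; then $W_{i^{*}}$ is entirely white at time $0$ and, by $(r,w)$-robustness, remains so for every subsequent round. But $W_{i^{*}}$ contains at least one node of $B^{*} \cup B^{**}$, which is black at some time in the cycle --- a contradiction. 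Hence $B^{*} \cup B^{**}=\emptyset$, and the process reaches the all-white configuration within $T$ rounds, a constant depending only on $r,d$.

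The hard part will be establishing the constant bound $|A| \le g(r,d)$: one must show that a set of fewer than $2^{r-1}$ nodes cannot percolate under $r$-BP on $\mathbb{T}_L^d$, so that the closure of any such seed has size bounded independently of $L$. This is a standard sub-critical claim, provable by a generalised perimeter (``fugitive'') argument of the type used in the bootstrap-percolation literature; beyond this, the proof is a clean combination of Goles--Olivos periodicity with Lemma~\ref{lemma 3} and a single pigeonhole step.
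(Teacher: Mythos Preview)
Your argument is structurally identical to the paper's: confine the dynamics to a constant-size region, invoke Goles--Olivos to get period $\le 2$ within constant time, observe that the union of the black sets in the limit cycle is $(r,b)$-robust, then apply Lemma~\ref{lemma 3} together with a pigeonhole step on the $2^{r-1}$ disjoint $(r,w)$-robust sets against the $<2^{r-1}$ initial black nodes. The only difference is the confinement step: the paper simply covers the (constantly many) initial black nodes by constant-size hyper-rectangles at pairwise distance $\ge 3$ and notes that every node outside has at most one neighbour in the rectangles, so the complement is $(r,w)$-robust. You instead pass through the $r$-BP closure $A$ of $B_0$.

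Your route is valid, but the justification you give for $|A|\le g(r,d)$ is not. The ``perimeter/fugitive'' argument you invoke shows that the edge-boundary is non-increasing only when $r\ge d$: a newly infected node removes at least $r$ boundary edges and creates at most $2d-r$, so the net change is $\le 2d-2r$, which is positive for $r<d$. Likewise, ``below the percolation threshold'' is not the right notion here --- non-percolation does not by itself give an $L$-independent bound on the closure. The bound $|A|\le g(r,d)$ \emph{is} true, but the clean way to see it is precisely the paper's hyper-rectangle clustering (a constant number of seeds can be grouped into constant-size boxes at pairwise distance $\ge 3$; the $r$-BP closure of each group stays inside its box since any node just outside a box has at most one neighbour in it). So your ``hard part'' collapses to the paper's one-line confinement, and the detour through the $r$-BP closure buys nothing. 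Once you fix that justification, your proof and the paper's are the same.
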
 
\vspace{-0.4 cm}
\begin{proof}
Consider an initial configuration $\mathcal{C}_0$ which includes less than $2^{r-1}$ black nodes and denote the set of black nodes in $\mathcal{C}_0$ with $B$. Define the distance between two hyper-rectangles $HR$ and $HR'$ to be $d(HR,HR')=\min_{v\in HR, u\in HR'}d(v,u)$. Let $HR_1,\cdots, HR_k$ be constant-size hyper-rectangles whose pair-wise distance is at least three and include all black nodes. Notice such a set of hyper-rectangles exists since $|B|$ is a constant. All nodes which are not in the hyper-rectangles are white and remain white forever; that is, they are a white $(r,w)$-robust set. This is true because a node which is not in the hyper-rectangles is adjacent to at most one hyper-rectangle (otherwise it violates the aforementioned distance property), which implies at most one of its $2d$ neighbors is black. Therefore, only nodes in the hyper-rectangles can switch their color. Since the hyper-rectangles are of constant size, the number of configurations that the process can possibly reach from $\mathcal{C}_0$ is upper-bounded by some constant $T(d,r)$. That is, the process reaches a cycle of configurations in at most $T$ rounds. Based on the results by Goles and Olivos~\cite{goles1980periodic}, we know the length of the cycle is one or two.

Let $B'$ be the union of black nodes in the configuration(s) in the cycle; we claim $B'$ is an $(r,b)$-robust set. Therefore, if $B'$ is non-empty it must intersect at least $2^{r-1}$ pairwise disjoint $(r,w)$-robust sets based on Lemma~\ref{lemma 3}. However, initially there are at most $2^{r-1}-1$ black nodes, which can intersect at most $2^{r-1}-1$ pair-wise disjoint $(r,w)$-robust sets. Thus, there is at least one $(r,w)$-robust set which is initially fully white, but at the end includes a black node, which is a contradiction with its $(r,w)$-robustness. Thus, $B'$ is actually empty.

It only remains to show that $B'$ is $(r,b)$-robust. If the process reaches a cycle of length one, a fixed configuration, trivially the set of black nodes is an $(r,b)$-robust set. If it reaches a cycle of length two and switches between two configurations $\mathcal{C}_1$ and $\mathcal{C}_2$, we define $B'_1$ and $B'_2$ to be the set of black nodes in $\mathcal{C}_1$ and $\mathcal{C}_2$, respectively. The set $B'=B'_1\cup B'_2$ is $(r,b)$-robust since each node in $B'_1$ (similarly $B'_2$) has at least $r$ neighbors in $B'_2$ (resp. $B'_1$), otherwise it cannot be black in $\mathcal{C}_1$ (resp. $\mathcal{C}_2$). Therefore, each node in $B'$ has at least $r$ neighbors in $B'$, which implies it is an $(r,b)$-robust set. 
\end{proof}
\vspace{-0.3 cm}
\begin{theorem}
\label{theorem 1}
Two-way $r$-BP with $p\ll L^{-d/2^{r-1}}$ on $\mathbb{T}_L^d$ becomes fully white a.a.s.
\end{theorem}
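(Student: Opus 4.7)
The plan is to implement the clustering strategy outlined at the start of Section~\ref{phase 1}. Set $k := 2^{r-1}$. I would preprocess the initial configuration $\mathcal{C}_0$ by iteratively growing \emph{clumps}: start with each initially black node as its own singleton clump and, as long as two clumps have bounding hyper-rectangles at $\mathbb{T}_L^d$-distance strictly less than $3$, merge them. The output is a partition of the initially black nodes into clumps whose bounding hyper-rectangles $HR_1,\ldots,HR_m$ have pairwise $\mathbb{T}_L^d$-distance at least $3$.

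The probabilistic heart of the argument is showing that a.a.s.\ every clump has fewer than $k$ members. A clump of size $m$ produced by this procedure has $L_1$-diameter at most $D(m)$, where $D(1)=0$ and $D(m)\leq D(m_1)+D(m_2)+2$ whenever $m=m_1+m_2$ with $m_1,m_2\geq 1$: merging two bounding rectangles at $\mathbb{T}_L^d$-distance at most $2$ yields a rectangle whose $L_1$-diameter exceeds the sum of the two input diameters by at most the sum of coordinate-wise gaps, which is itself at most $2$. Unrolling gives $D(m)\leq 2(m-1)$, so a clump of size $\geq k$ exhibits $k$ initially black nodes with pairwise $L_1$-distance at most $2(k-1)$, all fitting inside a common hyper-rectangle of side at most $2(k-1)$. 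The number of such ``bunched'' $k$-sets in $V$ is at most $L^d\cdot(2k-1)^{dk}$, each occurring with probability $p^k=p^{2^{r-1}}\ll L^{-d}$; the expected number of bunched sets is $o(1)$, and Markov's inequality completes this step.

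On the a.a.s.\ event that every clump has fewer than $k$ members, the hyper-rectangles $HR_1,\ldots,HR_m$ satisfy exactly the hypothesis used inside the proof of Lemma~\ref{lemma 1}. Reusing that argument, any node $v$ outside the union has at most one neighbor in each $HR_i$ (an elementary property of axis-aligned hyper-rectangles in $\mathbb{T}_L^d$) and is adjacent to at most one $HR_i$ by the distance-$\geq 3$ separation, so it has at most one black neighbor in every configuration and, since $r\geq 2$, stays white forever. The clumps therefore evolve independently of one another, and Lemma~\ref{lemma 1} applied clump-by-clump forces every clump --- and hence the whole torus --- to become fully white within $T(d,r)$ rounds. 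The principal obstacle is the diameter control in the second paragraph: without the tight bound $D(m)\leq 2(m-1)$, the first-moment calculation would blow up and the threshold $p\ll L^{-d/2^{r-1}}$ would not suffice; with this bound in hand, the rest of the argument is a direct, local re-run of the proof of Lemma~\ref{lemma 1}.
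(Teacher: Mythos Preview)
Your proof follows the same clustering-then-Lemma~\ref{lemma 1} strategy as the paper, and is correct in substance. The implementation of the size/diameter control differs: the paper tracks each side of the bounding rectangle separately (observation~(a), a factor-$3$ growth bound per merge) and then argues via a semi-connected black path that a rectangle with maximum side $\ell$ must contain at least $\ell/2$ black nodes (observation~(b)); you instead bound the total $L_1$-diameter of the bounding rectangle by the recursion $\tilde D(\text{box})\le \tilde D(\text{box}_1)+\tilde D(\text{box}_2)+\sum_j\text{gap}_j\le \tilde D(\text{box}_1)+\tilde D(\text{box}_2)+2$, which yields $\tilde D\le 2(m-1)$ for a clump on $m$ points directly. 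This is more economical and sidesteps the semi-connectivity argument entirely, while arriving at the same first-moment computation as the paper's observation~(c).

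One small imprecision: from a final clump of size $\ge k$ you cannot immediately extract $k$ points at pairwise $L_1$-distance $\le 2(k-1)$, since the clump's size $m$ may exceed $k$ and your bound reads $2(m-1)$. The fix is immediate: consider the \emph{first} merge producing a clump of size $\ge k$; both parents then have size $<k$, so this clump has size at most $2k-2$ and hence bounding-box $L_1$-diameter at most $4k-6$. This only changes the constant in your count of bunched $k$-sets (from $(2k-1)^{dk}$ to, say, $(4k-5)^{dk}$) and leaves the $o(1)$ conclusion intact.
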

\vspace{-0.4 cm}
\begin{proof}
Recall that the distance between two hyper-rectangles $HR$ and $HR'$ is equal to $d(HR,HR')=\min_{v\in HR, u\in HR'}d(v,u)$. We show for the initial configuration a.a.s. there is a set of hyper-rectangles which are pair-wise in distance at least three from each other and any black node belongs to one of these hyper-rectangles and the number of black nodes in each hyper-rectangle is less than $2^{r-1}$. Each node which is not in any of the hyper-rectangles is adjacent to at most one of them (otherwise, there are two hyper-rectangles whose distance is less than three). Thus, each of these nodes has at least $2d-1$ white neighbors which implies they all stay white forever. Furthermore, in each of these ``isolated'' hyper-rectangles there are less than $2^{r-1}$ black nodes which disappear after at most $T$ rounds by Lemma~\ref{lemma 1}.

It remains to prove that a.a.s. such a set of hyper-rectangles exist. For each black connected component in $\mathcal{C}_0$, consider the smallest hyper-rectangle which includes all its node. Let $\mathcal{A}_0$ be the set of these (not necessarily disjoint) hyper-rectangles. There is no black connected component of size $2^{r-1}$ or larger in $\mathcal{C}_0$ a.a.s. Let $X$ denote the number of black connected subgraphs of size $2^{r-1}$ in $\mathcal{C}_0$. The number of connected subgraphs of size $2^{r-1}$ which include an arbitrary node $v$ is a constant (notice $d$, thus also $r$, is fixed); then, the number of connected subgraphs of size $2^{r-1}$ is of order $\Theta(L^d)$. Thus, $\mathbb{E}[X]=\Theta(L^d)\ p^{2^{r-1}}=\Theta(L^d)\ o(L^{-d})=o(1)$. By Markov's inequality~\cite{dubhashi2009concentration} a.a.s. there is no black connected subgraph of size $2^{r-1}$, which implies there is no black connected component of this size or larger. Therefore, for any hyper-rectangle of size $l_1\times\cdots \times l_d$ in $\mathcal{A}_0$, $l_j<2^{r-1}$ for all $1\leq j\leq d$ a.a.s.

Consider the following procedure. By starting from $\mathcal{A}=\mathcal{A}_0$, in each iteration if all hyper-rectangles in $\mathcal{A}$ are pair-wise in distance at least three from each other, the procedure is over, otherwise there are two hyper-rectangles $HR_1,HR_2\in \mathcal{A}$ such that $d(HR_1,HR_2)\leq 2$. In this case, we set $\mathcal{A}=\mathcal{A}\setminus\{HR_1,HR_2\}\cup\{HR\}$, where $HR$ is the smallest hyper-rectangle which includes all black nodes in both $HR_1$ and $HR_2$. See Figure~\ref{fig 3} (a) and (b) for an example, where the boundaries of the smallest hyper-rectangles are distinguished by green. The process definitely terminates, because in each round $|\mathcal{A}|$ decreases. Moreover, when the process is over, the hyper-rectangles in $\mathcal{A}$ satisfy our desired distance property. We still have to show that each of them contains less than $2^{r-1}$ black nodes. Let us make the three following observations.
\begin{figure}[h]
\begin{center}
\includegraphics[width=0.7	\textwidth]{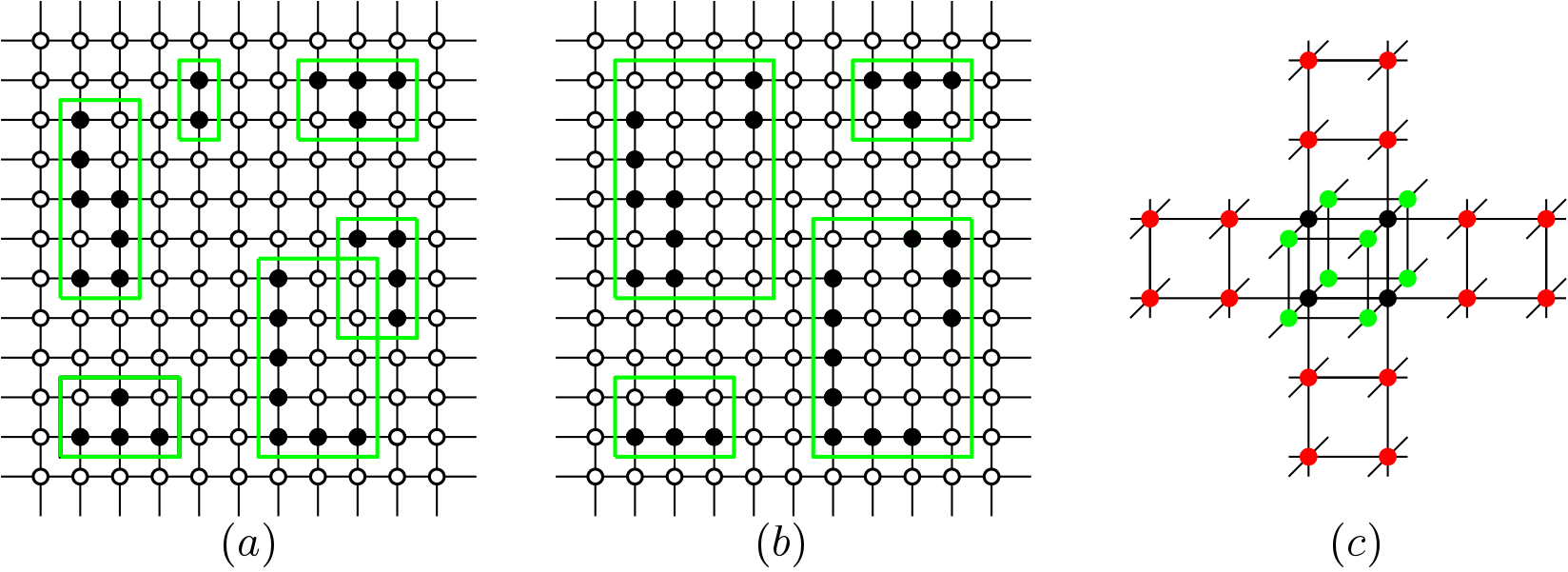}
\caption{(a) the smallest hyper-rectangles (b) after two iterations (c) the inner and outer neighbors.\label{fig 3}}
\end{center}
\vspace{-0.5 cm}
\end{figure}

(a) Let $HR$ of size $l_1\times\cdots\times l_d$ be the smallest hyper-rectangle which contains all black nodes in both $HR_1$ and $HR_2$ respectively of size $l^{(1)}_1\times\cdots\times l^{(1)}_d$ and $l^{(2)}_1\times\cdots\times l^{(2)}_d$ in the above procedure, we have $l_j\leq 3\max_{i\in\{1,2\}}l^{(i)}_j$ for all $1\leq j\leq d$ because $d(HR_1,HR_2)\leq 2$.

(b) Assume a hyper-rectangle $HR$ of size $l_1\times\cdots\times l_d$ starting in $(i_1,\cdots,i_d)$ is in $\mathcal{A}$ at some iteration in the above procedure, then it contains at least $\max_{1\leq j\leq d} l_j/2$ black nodes. Intuitively, this should be obvious since in each iteration we combine two hyper-rectangles whose distance is at most two. For a formal proof, let us first show that for any two black nodes $v,u$ in a hyper-rectangle $HR$ in $\mathcal{A}$, there is a semi-connected path between $v$ and $u$ along the black nodes in $HR$. We apply proof by induction; initially, this is trivially true since each hyper-rectangle includes a black connected component. Assume in $k$-th iteration we combine $HR_1$ and $HR_2$ because there is node $v'$ in $HR_1$ and node $u'$ in $HR_2$ such that $d(v',u')\leq 2$. In the new hyper-rectangle $HR$, every two black nodes originally from $HR_1$ (similarly from $HR_2$) are semi-connected by the induction hypothesis. Two black nodes $v$ and $u$ respectively from $HR_1$ and $HR_2$ are also semi-connected along a semi-connected path from $v$ to $v'$, from $v'$ to $u'$, and finally from $u'$ to $u$. Assume $l_{j'}=\max_{1\leq j\leq d}l_j$; since $HR$ is the smallest hyper-rectangle, there is a black node whose $j'$-th coordinate is $i_{j'}$ and a black node whose $j'$-th coordinate is $i_{j'}+l_{j'}$. Consider the semi-connected path between these two nodes which clearly includes at least $l_{j'}/2$ black nodes.

(c) In $\mathcal{C}_0$ a.a.s. there is no hyper-rectangle $HR$ of size $l_1\times\cdots\times l_d$ which includes at least $2^{r-1}$ black nodes and $l_j< 6\cdot 2^{r-1}$ for all $1\leq j\leq d$. Let random variable $Y$ denote the number of such hyper-rectangles. The number of hyper-rectangles of the aforementioned sizes starting from a fixed node $i$ is bounded by constant $K=(6\cdot2^{r-1})^{d}$, which implies there are at most $KL^d$ hyper-rectangles of such sizes. Thus, $\mathbb{E}[Y]\leq KL^d {K \choose 2^{r-1}}p^{2^{r-1}}=o(1)$ for $p=o(L^{-\frac{d}{2^{r-1}}})$, which implies $Y=0$ a.a.s. by applying Markov's inequality.

At the beginning of the proof we showed that all the sides of any hyper-rectangle in $\mathcal{A}_0$ are smaller than $2^{r-1}$ a.a.s. Putting this fact in parallel with (a), we conclude if the process does not terminate while all the sides of any hyper-rectangle in $\mathcal{A}$ are smaller than or equal to $2\cdot 2^{r-1}$, then it has to generate a hyper-rectangle $HR'$ of size $l'_1\times\cdots\times l'_d$ such that $\forall\ 1\leq j\leq d$, $l'_j\leq 6\cdot2^{r-1}$ and there exists $1\leq j'\leq d$ such that $2\cdot 2^{r-1}<l'_{j'}$. Based on (b), $HR'$ must include at least $l'_{j'}/2 \geq 2^{r-1}$ black nodes; however, based on (c) such an $HR'$ does not exist a.a.s. Therefore, a.a.s. the process terminates while all the sides of any hyper-rectangle in $\mathcal{A}$ are upper-bounded by $2\cdot 2^{r-1}$. By applying (c) another time, none of the hyper-rectangles includes $2^{r-1}$ or more black nodes a.a.s. 
\end{proof}
\vspace{-0.6 cm}
\subsection{Phase 2}
\label{phase 2}
In this section, we prove that two-way $r$-BP with $L^{-\frac{d}{2^{r-1}}}\ll p\ll (\log_{(r-1)}L)^{-\frac{d-r+1}{2^{r-1}}}$ on $\mathbb{T}_L^d=(V,E)$ results in the stable coexistence of both colors a.a.s.

Let us first show that black color a.a.s. will survive for $L^{-\frac{d}{2^{r-1}}}\ll p$. As discussed, in $\mathbb{T}_L^d$ there are $L^d/\gamma$ pair-wise disjoint $r$-dimensional hyper-squares, for a constant $\gamma\simeq 2^r$. Consider an arbitrary labeling from 1 to $L^d/\gamma$ on these hyper-squares and define Bernoulli random variable $x_k$ for $1\leq k\leq L^d/\gamma$ to be 1 if the even-part of $k$-th hyper-square is fully black in $\mathcal{C}_0$ and let $X:=\sum_{k=1}^{L^d/\gamma}x_k$. We show $X\ne 0$ a.a.s., which implies that there is a hyper-square whose even-part is fully black initially. Since the even-part of an $r$-dimensional hyper-square is an $(r,b)$-eternal set (see Lemma~\ref{lemma 4}), it guarantees the survival of black color. We have $\mathbb{E}[X]=(L^d/\gamma) p^{2^{r-1}}=(L^d/\gamma) \omega(1/L^d)=\omega(1)$, where we used that the even-part of an $r$-dimensional hyper-square is of size $2^{r-1}$. Since $X$ is the sum of independent Bernoulli random variables, we have $Pr[X=0]\leq \exp(-\omega(1))=o(1)$ by Chernoff bound.

It remains to prove that white color survives a.a.s. if $p\ll \mathscr{P}_1^{1/2^{r-1}}$. Based on Lemma~\ref{lemma 1}, there is a constant $T$ so that by starting from an initial configuration with less than $2^{r-1}$ black nodes, we have no black nodes after $T$ rounds. We claim this implies that for an arbitrary node $v$ to be black in round $T$, it needs at least $2^{r-1}$ black nodes in its $T$-neighborhood (i.e., nodes in distance at most $T$ from $v$) in the initial configuration. For the sake of contradiction, assume that there is an initial configuration $\mathcal{C}_0$ in which $v$ has less than $2^{r-1}$ black nodes in its $T$-neighborhood and it is black in the $T$-th round. Then, we consider the initial configuration $\mathcal{C}'_0$ in which all nodes in $v$'s $T$-neighborhood have the same color as $\mathcal{C}_0$ and all others are white. Configuration $\mathcal{C}'_0$ has less than $2^{r-1}$ black nodes. Furthermore, $v$ must be black after $T$ rounds by starting from $\mathcal{C}'_0$ because the color of $v$ in round $T$ is only a function of the initial color of nodes in its $T$-neighborhood (this is easy to see; however, for a formal proof one can simply apply induction) and the color of all nodes in the $T$-neighborhood of $v$ is the same as $\mathcal{C}_0$. However, this is in contradiction with Lemma~\ref{lemma 1}.

So far, we know that for a node $v$ to be black in round $T$, it needs at least $2^{r-1}$ black nodes in its $T$-neighborhood initially. This immediately implies that for an arbitrary node, the probability of being black in round $T$ is upper-bounded by $Kp^{2^{r-1}}=o(\mathscr{P}_1)$, where constant $K$ is an upper-bound on the number of possibilities of choosing $2^{r-1}$ nodes in the $T$-neighborhood of an arbitrary node in $\mathbb{T}_L^d$; notice since $d$ and $T$ both are constant, the number of nodes in $T$-neighborhood of a node is bounded by a constant. Therefore, in round $T$ each node is black with probability $o(\mathscr{P}_1)$. It is known (see Theorem~\ref{theorem 2}) that the stronger model of $r$-BP results in the survival of white color from such a configuration a.a.s., so does two-way $r$-BP. The first part of the last statement is not fully correct since in $r$-BP each node is black independently, but here clearly the color of a node is not independent from the color of nodes in its $2T$-neighborhood. In the appendix, Section~\ref{dependency}, we show that the proof of Theorem~\ref{theorem 2} is robust enough to tolerate this level of local dependency.
\vspace{-0.2 cm}
\begin{theorem}
\cite{cerf2002threshold}
\label{theorem 2}
In $r$-BP on $\mathbb{T}_L^d$ if $p\ll \mathscr{P}_1$, white color will survive forever a.a.s. 
\end{theorem}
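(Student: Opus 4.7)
The plan is to adapt the classical Aizenman--Lebowitz renormalization argument, in the refined form developed by Schonmann and Cerf--Manzo, to the subcritical regime of $r$-BP. The starting structural observation is that $r$-BP is monotone and local: call a hyper-rectangle $R$ \emph{internally spanned} if running $r$-BP on the induced subgraph of $R$ with initial black seeds $R \cap B_0$ (where $B_0$ is the set of black nodes in $\mathcal{C}_0$) eventually turns all of $R$ black. The Aizenman--Lebowitz lemma then asserts that if the entire torus ever becomes fully black under $r$-BP, then for every scale $k$ with $1 \le k \le L$ there must exist an internally spanned hyper-rectangle whose longest side lies in $[k, 2k]$. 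This is proven by tracking the cluster mergings produced as $r$-BP runs on the initial configuration: starting from singletons one repeatedly merges nearby internally spanned pieces, and since the diameter grows by at most a constant factor per merge, some intermediate rectangle must land at every chosen scale.

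The second ingredient is an upper bound on the probability $q_k$ that a fixed hyper-rectangle of side $k$ is internally spanned. The standard induction on $r$ proceeds by slicing: for $R$ to be internally spanned, when we look along any coordinate direction, each slab of $R$ must either already be $(r-1)$-internally spanned within itself or receive enough help across the cut from adjacent slabs. Iterating this slicing $r-1$ times reduces the question to a $1$-BP-type growth problem inside $d$-dimensional boxes, and yields a bound on $q_k$ that picks up one iterated-exponential layer in $1/p$ per level of recursion, valid for $k$ below a critical scale $k_c$ at which the factors balance.

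Third, one closes via a union bound. Since $\mathbb{T}_L^d$ contains at most $L^d k_c^d$ hyper-rectangles of side in $[k_c, 2k_c]$, the expected number of internally spanned rectangles at the critical scale is at most $L^d k_c^d \cdot q_{k_c}$. A direct computation, with $k_c$ tuned so that the competition between $L^d$ and $q_{k_c}$ produces a tight threshold, shows this expected count is $o(1)$ precisely when $p \ll \mathscr{P}_1 = (\log_{(r-1)} L)^{-(d-r+1)}$. Markov's inequality then rules out, a.a.s., the existence of an internally spanned rectangle at the critical scale, and the Aizenman--Lebowitz lemma forces the full torus not to become fully black in any round; hence white color survives.

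The main obstacle is the recursive bound on $q_k$: propagating the probabilistic estimate from $(r-1)$-BP on $(d-1)$-dimensional slabs back to $r$-BP on the full box, while producing precisely the iterated logarithmic factor $(\log_{(r-1)} L)^{-(d-r+1)}$ after the union bound over $L^d$ positions, is technically delicate. This bookkeeping of nested scales, first achieved for $d=r=2$ by Aizenman and Lebowitz and then pushed to general $1\le r\le d$ by Cerf and Manzo, is the heart of the proof and the part one cannot avoid redoing when trying to derive Theorem~\ref{theorem 2} directly.
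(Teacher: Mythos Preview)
The paper does not give its own proof of Theorem~\ref{theorem 2}; it is quoted verbatim as a result of Cerf and Manzo~\cite{cerf2002threshold} and used as a black box in Section~\ref{phase 2}. So there is nothing in the paper to compare your argument against at the level of a full proof.

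That said, your outline is an accurate high-level summary of the Cerf--Manzo strategy (itself building on Aizenman--Lebowitz and Schonmann): the internally-spanned-rectangle decomposition, the Aizenman--Lebowitz lemma guaranteeing a spanned rectangle at every intermediate scale, the inductive slab estimate on the probability that a box of a given size is internally spanned, and the final union bound that produces the iterated-log threshold $\mathscr{P}_1$. The paper corroborates this picture indirectly: in the appendix (Section~\ref{dependency}) it revisits the Cerf--Manzo proof to adapt it to a locally dependent initial configuration, and there it identifies the two key lemmata (their Lemma~5.1 and Lemma~5.2) and sketches exactly the slice-by-slice argument you describe, including the connection probability bound $Pr[v\stackrel{p,r}{\longleftrightarrow} u]\le p^{\gamma\|v-u\|_\infty}$ obtained by induction on $r$. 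So your sketch is consistent both with the original source and with the paper's own understanding of that source; the only thing to flag is that you are reconstructing a cited result rather than matching a proof the paper actually contains.
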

\vspace{-0.5 cm}
\subsection{Phase 3}
\label{phase 3}
In this section, we prove that in two-way $r$-BP on $\mathbb{T}_L^d$, if $\mathscr{P}_1^{1/2^{r-1}}\ll p$ then a.a.s. the process becomes fully black, where $\mathscr{P}_1=(\log_{(r-1)}L)^{-(d-r+1)}$. For the sake of simplicity, assume $L$ is even (we discus at the end, how our argument easily carries on the odd case). Recall that the tiling procedure (from Section~\ref{definitions}) partitions the node set of $\mathbb{T}_L^d$ into $L^d/2^r$ pair-wise disjoint $r$-dimensional hyper-squares. We say two hyper-squares are neighbors if their distance is equal to one, i.e., there is an edge between them. More precisely, the neighbors of an $r$-dimensional hyper-square $HS$ starting from $i=(i_1,\cdots,i_d)$ are divided into two groups. First, $2r$ hyper-squares whose starting nodes differ with $i$ only in one of the first $r$ coordinates and exactly by two, which are called the \emph{inner} neighbors. The second group are $2(d-r)$ hyper-squares whose starting nodes differ with $i$ in only one of the last $d-r$ coordinates and exactly by one, which are called the \emph{outer} neighbors. The two hyper-squares whose starting nodes differ with $i$ in the $j$-th coordinate are called the neighbors in the $j$-th dimension. See Figure~\ref{fig 3} (c) for an example of the inner (red) and outer (green) neighbors of a 2-dimensional hyper-square in $\mathbb{T}_L^3$. Furthermore, let us define the \emph{parity} of $HS$ to be the parity of the sum of the last $d-r$ coordinates of $i$. Clearly, the inner neighbors have the same parity as $HS$ but the outer neighbors have different parity. 

From now on, we only look at the even rounds; i.e., we only consider $\mathcal{C}_t$ for even $t$. For an $r$-dimensional hyper-square of even parity (similarly odd parity), we say it is \emph{occupied} in $\mathcal{C}_t$ if its even-part (resp. odd-part) is black. Based on Lemma~\ref{lemma 4}, an occupied hyper-square remains occupied forever. In Lemma~\ref{lemma 5}, we state that if in some configuration in two-way $r$-BP on $\mathbb{T}_L^d$, an $r$-dimensional hyper-square has occupied neighbor in at least $r$ distinct dimensions then it becomes occupied in constantly many rounds. The proof is technical and is presented in the appendix, Section~\ref{proof of lemma 5}. The idea is to apply induction on $r$. See Figure~\ref{fig 4}, for two examples on how a 2-dimensional hyper-square becomes occupied with occupied neighbors in two distinct dimensions (regarding the selection of black nodes, recall that the parity of a hyper-square is the same as its inner neighbors but different with outer ones).
\vspace{-0.5 cm}
\begin{figure}[h]
\begin{center}
\includegraphics[width=0.6	\textwidth]{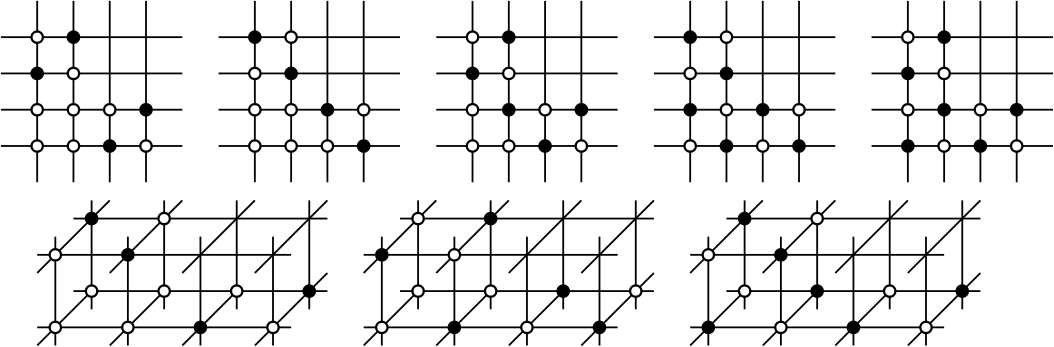}
\caption{(top) two inner occupied neighbors (bottom) one inner and one outer occupied neighbor. \label{fig 4}}
\end{center}
\end{figure}
\vspace{-0.8 cm}
\begin{lemma}
\label{lemma 5}
In two-way $r$-BP on $\mathbb{T}_L^d$, if an $r$-dimensional hyper-square has occupied neighbor in at least $r$ distinct dimensions, it becomes occupied in $t'$ rounds for some even constant $t'$.
\end{lemma}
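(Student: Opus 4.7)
The plan is to proceed by induction on $r$.

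For the base case $r=2$, the hyper-square $HS$ has four nodes, split into the two-element sets $HS^{(1)}$ and $HS^{(2)}$. Given two occupied neighbors in two distinct dimensions, I case-analyze the configurations (inner-inner with the two neighbors on the same or opposite sides, inner-outer, outer-outer) and trace the dynamics of two-way 2-BP over at most four rounds. By Lemma~\ref{lemma 4}, the even-part and odd-part of each occupied neighbor alternate being black at consecutive rounds, sending alternating signals into $HS$ via the interface edges. In the tightest case (two inner neighbors on the same side), the vertex of $HS$ adjacent to both occupied neighbors becomes black at $t+2$, the two ``middle'' vertices at $t+3$, and the remaining vertex together with the first fills the appropriate eternal set of $HS$ at $t+4$, so $HS$ is occupied with $t'\le 4$.

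For the inductive step, let $HS$ be an $(r+1)$-dimensional hyper-square with occupied neighbors $N_1,\dots,N_{r+1}$ in distinct dimensions $j_1,\dots,j_{r+1}$. Pick one occupied neighbor, say $N_1$ in dimension $j^*$ (WLOG inner and on the $+$ side; the other cases are analogous). Split $HS$ along $j^*$ into two $r$-dimensional ``slice'' hyper-squares $HS_0$ and $HS_1$, and similarly slice each other $N_i$. A direct parity computation shows that for each $i\ge 2$, the slice of $N_i$ at the $j^*$-coordinate of $HS_0$ (resp.~$HS_1$) is an occupied $r$-dimensional neighbor of $HS_0$ (resp.~$HS_1$) in the original dimension $j_i$. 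Additionally, the slice of $N_1$ adjacent to $HS_1$ is an occupied outer neighbor of $HS_1$ in dimension $j^*$. Thus $HS_0$ has occupied neighbors in $r$ distinct dimensions and $HS_1$ has occupied neighbors in $r+1$ distinct dimensions.

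Once both $HS_0$ and $HS_1$ are occupied as $r$-dimensional hyper-squares, a parity identity---that the appropriate eternal set of $HS$ decomposes as the union of the appropriate eternal sets of $HS_0$ and $HS_1$---shows that $HS$ itself is occupied. The main obstacle is that the ambient process is two-way $(r+1)$-BP whereas the induction hypothesis concerns two-way $r$-BP, in which each node needs one fewer black neighbor. I handle this by exploiting a ``bonus'' black-neighbor contribution from dimension $j^*$: each node of $HS_1$ has a neighbor in the slice of $N_1$ whose color alternates black/white in sync with the eternal-set pattern, supplying exactly one extra black neighbor at precisely the rounds when the node would otherwise fall one short under the $(r+1)$-BP threshold. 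A symmetric argument then handles $HS_0$ once the developing occupation of $HS_1$ can supply the analogous bonus in the $j^*$ direction. Verifying that these bonuses arrive at the right rounds---so that $(r+1)$-BP on each slice faithfully simulates $r$-BP and the induction hypothesis applies---is the technical crux of the argument.
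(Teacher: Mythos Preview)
Your proposal is correct and follows essentially the same approach as the paper: induction on $r$ with the $r=2$ base case handled by direct case analysis, and the inductive step carried out by slicing the hyper-square into two faces, using one occupied neighbor as a ``bonus'' black neighbor to compensate for the gap between $(r{+}1)$-BP and $r$-BP on a face, applying the induction hypothesis to the face adjacent to that bonus, and then letting the newly occupied face serve as the bonus for the other face. The only cosmetic difference is that the paper always slices along the last inner coordinate (and then distinguishes whether some occupied neighbor is outer or whether all are inner, which forces one of them to lie in that coordinate), whereas you slice along the dimension of a chosen occupied inner neighbor; note that your ``WLOG inner'' needs the small adjustment that if every occupied neighbor is outer you slice along any inner coordinate, in which case both faces inherit all $r{+}1$ occupied neighbors and the argument is even easier.
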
 
For our proof, we also need that modified $r$-BP on $\mathbb{T}_L^d$ with initial probability $\omega(\mathscr{P}_1)$ results in fully black configuration a.a.s. However, this is known only for $d=r$ by Holroyd~\cite{holroyd2006metastability}. He showed that the process exhibits a sharp threshold behavior at $\lambda' \mathscr{P}_1$ for some constant $\lambda'>0$. We require a much weaker statement; that is, the initial probability $\omega(\mathscr{P}_1)$ a.a.s. results in fully black configuration, but for all values of $r\le d$. The good news is that the upper bound proof by Cerf and Manzo~\cite{cerf2002threshold} regarding $r$-BP can be easily adapted to prove our desired upper bound for modified $r$-BP. Actually, exactly the same proof works because wherever they apply $r$-BP rule, modified $r$-BP suffices. However, it is interesting by its own sake to study the sharp threshold behavior of modified $r$-BP also for $r\ne d$, in future work.
\begin{theorem} (derived from~\cite{cerf2002threshold})
\label{theorem 3}
In modified $r$-BP on $\mathbb{T}_L^d$ for $r\le d$, if $p=\omega(\mathscr{P}_1)$, then the process becomes fully black. 
\end{theorem}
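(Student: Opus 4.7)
The plan is to reprise the Cerf--Manzo upper bound proof for $r$-BP~\cite{cerf2002threshold} and check that every activation it invokes is in fact a valid modified $r$-BP activation; i.e., each time a white node is turned black in their growth argument, its $r$ witnessing black neighbors can be chosen to lie in $r$ pairwise distinct dimensions. This verification is needed because modified $r$-BP is \emph{weaker} than $r$-BP in general, so the conclusion does not follow by monotonicity and must be extracted from the internal structure of the argument rather than cited as a black box.

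The Cerf--Manzo proof splits naturally into a probabilistic renormalization step and a deterministic growth step. The renormalization step tiles $\mathbb{T}_L^d$ with blocks on a carefully chosen scale, declares a block \emph{good} if its initial configuration contains a constant-size seed, and shows that for $p=\omega(\mathscr{P}_1)$ the good blocks appear densely enough to drive a renormalized percolation a.a.s. This part is purely combinatorial in $p$ and does not reference the update rule at all, so it transfers verbatim to modified $r$-BP. What remains is to examine the deterministic growth step, which shows that once a suitable seed pattern is present the black region grows, layer by layer, until it covers all of $\mathbb{T}_L^d$.

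I would carry out the verification as follows. In the typical growth step the current black set contains a hyper-rectangle $R$ with nontrivial extent in at least $r$ distinct dimensions, and a new boundary node $v$ is to be added in some dimension $j$: then $v$ has one black neighbor inside $R$ in dimension $j$, together with $r-1$ further black neighbors in $r-1$ of the other dimensions along which $R$ extends, giving black neighbors in $r$ distinct dimensions as required by modified $r$-BP. Thus the step is legal under modified $r$-BP as well, and inductively the growth proceeds identically under both rules. One must also ensure the seeds produced by the renormalization step already span $r$ distinct dimensions; this is not restrictive, since such $r$-dimensional seeds occur abundantly when $p=\omega(\mathscr{P}_1)$, by the same Chernoff/first-moment estimates used for the original seeds in Cerf--Manzo. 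The main (and only) obstacle is the bookkeeping: tracking every place in the Cerf--Manzo proof where the $r$-BP activation rule is invoked and confirming the dimension-distinctness of its witnessing neighbors, which is straightforward but tedious.
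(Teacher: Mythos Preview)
Your proposal is correct and matches the paper's own approach: the paper simply asserts that ``exactly the same proof works because wherever they apply $r$-BP rule, modified $r$-BP suffices,'' which is precisely the verification you outline. You have supplied more structural detail about the Cerf--Manzo argument (renormalization versus deterministic growth, and why each growth step uses neighbors in distinct dimensions) than the paper itself does, but the underlying idea is identical.
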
   
Now, it is time to put the aforementioned claims together to finish the proof. If we tile $\mathbb{T}_L^d$ into hyper-squares as above, in two-way $r$-BP with $p=\omega(\mathscr{P}_1^{1/2^{r-1}})$ each hyper-square is occupied initially with probability $\omega(\mathscr{P}_1)$. Furthermore, based on Lemma~\ref{lemma 5} if a hyper-square has occupied neighbor in at least $r$ distinct dimensions, it becomes occupied, which implies the occupation process among the hyper-squares is at least as strong as modified $r$-BP. Based on Theorem~\ref{theorem 3}, we know modified $r$-BP with initial probability $\omega(\mathscr{P}_1)$ becomes fully black a.a.s. Thus, all the hyper-squares become occupied in our process a.a.s. We can do the same argument by just switching the terms of even and odd in the definition of occupation. Then, by a union bound, a.a.s. for two-way $r$-BP on $\mathbb{T}_L^d$ with $p=\omega(\mathscr{P}_1^{1/2^{r-1}})$ eventually both the even-part and odd-part of all the hyper-squares are black, which implies that the process becomes fully black. 

We assumed at the beginning that $L$ is even. Theorem~\ref{theorem 3} also works for the $d$-dimensional lattice $[L]^{d}$. Therefore, for odd $L$ we can do the same argument for the lattice, attained by skipping the nodes with at least one coordinate equal to $L$, and also the lattice, attained by skipping the nodes with at least one coordinate equal to one. Then, again a union bound finishes the proof.
\vspace{-0.3cm}
\section{Future Work}
\label{future work}
We proved that two-way $r$-BP on $\mathbb{T}_L^d$ exhibits a threshold behavior with two phase transitions at $\mathscr{P}_1^{1/s}$ and $\mathscr{P}_2^{1/s}$ where $s$ is the minimum size of a $b$-eternal set. The question, then, arises: Can one prove such results for a larger class of models? We introduce a sub-class of monotone models on the $d$-dimensional torus and then explain how one can possibly employ our proof techniques to prove the desired threshold behavior in this more general framework. 

In $(r,r')$-BP on $\mathbb{T}_L^d$ and for $0\le r'\le r\le d$, by starting from an initial random configuration in discrete-time rounds each white node becomes black if and only if it has at least $r$ black neighbors and each black node remains black if and only if it has at least $r'$ black neighbors. This includes $\sum_{r=0}^{d}\sum_{r'=0}^{r'=r}1=(d+1)(d+2)/2$ different models on $\mathbb{T}_L^d$. For instance, $(r,0)$-BP, $(r,r)$-BP, and $(r,1)$-BP are respectively the same as $r$-BP, two-way $r$-BP, and $r$-BP with recovery. It is an interesting exercise to check that $(r,r')$-BP for $0\le r'\le r$ includes all monotone models. We add the constraint $r\le d$ to make sure that the model includes a constant-size $b$-eternal set (and no constant-size $w$-eternal set). Note that monotonicity of the model and constant-size $b$-eternal set are inseparable parts of our proof techniques.

Now, we illustrate by applying our proof techniques, some prior results, and some novel ideas one can possibly prove that $(r,r')$-BP on $\mathbb{T}_L^d$ goes through two phase transitions at $\mathscr{P}_1^{1/s}$ and $\mathscr{P}_2^{1/s}$, for $s$ being the minimum size of a $b$-eternal set. Notice that for $0\le r'\le r\le d$, an $r'$-dimensional hyper-square is a $b$-eternal set, which implies that $s$ is a constant smaller than $2^{r'}$. We assume that $r\ge 2$.
\begin{itemize}
\item Phase 1: We can show that the process becomes fully white if $p\ll \mathscr{P}_2^{1/s}=L^{-d/s}$, by replacing $2^{r-1}$ with $s$ in the proof of Theorem~\ref{theorem 1}, where the clustering technique is applied.
\item Phase 2: There are $\Theta(L^d)$ pair-wise disjoint $r'$-dimensional hyper-squares and each of them includes a $b$-eternal set of size $s$. For $L^{-d/s}=\mathscr{P}_2^{1/s}\ll p$, in expectation $\Theta(L^d)p^s=\omega(1)$ of these $b$-eternal sets are fully black in the initial configuration. Therefore, by applying Chernoff bound a.a.s. there is a fully black $b$-eternal set in the initial configuration. For $p\ll \mathscr{P}_1^{1/s}$, employing our argument from Section~\ref{phase 2} implies that after a constant number of rounds, each node is black with probability $\Theta(p^s)=o(\mathscr{P}_1)$. We know that the stronger model of $r$-BP results in the survival of white color from such a configuration a.a.s., so does $(r,r')$-BP. (Again, we clearly have the dependency issue, which can be handled by the argument from Section~\ref{dependency}.) 

\item Phase 3: We can apply the scaling technique by tiling the torus into $r'$-dimensional hyper-squares. However, to ``reduce'' this scaled process to modified $r$-BP, we need some knowledge about the structure of the $b$-eternal sets in addition to the value of $s$. We believe that one can extract sufficient structural properties such as symmetry from the definition of $(r,r')$-BP, but this is left for future work. 
\end{itemize}

In the present paper, we studied the random setting, but from an extremal point of view it is natural to ask: What is the minimum number of nodes which must be black initially to make the whole graph black? This question has been studied extensively for both $r$-BP and two-way $r$-BP on $\mathbb{T}_L^d$, see e.g.~\cite{balogh1998random,flocchini2004dynamic,balister2010random,morrison2018extremal,hambardzumyan2017polynomial,jeger2019dynamic}, and some lower and upper bounds are known. Can our proof techniques be used to improve on these bounds?

It is also interesting to study the \emph{expected consensus time} of the process, which is the expected number of rounds the process needs to reach a cycle of configurations for an initial random configuration. We are not aware of any result for two-way $r$-BP on $\mathbb{T}_L^d$, and for $r$-BP, the answer is known only for $d=2$, by Balister, Bollobas, and Smith~\cite{balister2016time}. 


\textbf{Acknowledgments.} The author likes to thank Raphael Cerf, Bernd G\"artner, and Roberto H. Schonmann for several stimulating discussions.
\bibliography{ref}
\newpage
\appendix
\section{Appendix}
\subsection{No Sharp Threshold in First Transition}
\label{no sharp}
In $r$-BP on $\mathbb{T}_L^d$, the process exhibits a sharp threshold behavior in the second phase transition; that is, if $p\ge (1+\epsilon)\lambda \mathscr{P}_1$ (analogously $p\le (1-\epsilon)\lambda \mathscr{P}_1$) for some fixed constant $\lambda(d,r)>0$ the process becomes (resp. does not become) fully black a.a.s. for any constant $\epsilon>0$. We claim that one cannot expect such a behavior in the first transition. We show that if $p=\mu\mathscr{P}_2$ for any constant $\mu$, then black color survives with some non-zero constant probability (which implies that there is no constant $\mu'$ such that $p\le (1-\epsilon)\mu' \mathscr{P}_2$ results in fully white configuration a.a.s. for any constant $\epsilon>0$). The probability that all nodes are white initially is equal to $(1-p)^{L^d}$ which is smaller than $\exp(-pL^d)=\exp(-\mu)$ for $p=\mu\mathscr{P}_2$, where we used the estimate $1-x\leq \exp(-x)$. Thus, there is a black node initially with a non-zero constant probability. 

Now by applying a similar argument, we show that one cannot also expect a sharp threshold behavior in the first phase transition of two-way $r$-BP on $\mathbb{T}_L^d$. We prove that if $p=\mu \mathscr{P}_2^{1/2^{r-1}}$ for any constant $\mu>0$, then black color has a constant non-zero probability to survive. There are $L^d/\gamma$ pair-wise-disjoint $r$-dimensional hyper-squares in $\mathbb{T}_L^d$ for some constant $\gamma>0$ and based on Lemma~\ref{lemma 4} the even-part of an $r$-dimensional hyper-square is an $(r,b)$-eternal set. The probability that the even-part of at least one of these disjoint hyper-square is black initially is equal to $1-(1-p^{2^{r-1}})^{L^d/\gamma}$. Again by applying the estimate $1-x\leq \exp(-x)$, this probability is lower-bounded by $1-\exp(-\frac{\mu^{2^{r-1}}}{\gamma})$, which is a non-zero constant. 

\subsection{Majority Model}
\label{majority}
 \begin{theorem}
 In the majority model on $\mathbb{T}_L^d$ if $p\le 1-\delta$ for any constant $\delta>0$, then white color survives a.a.s.
 \end{theorem}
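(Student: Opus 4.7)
The plan is to exhibit a constant-size $w$-robust set $Q$ in $\mathbb{T}_L^d$ under the majority rule, and then argue that a.a.s.\ some translate of $Q$ is fully white in the initial configuration $\mathcal{C}_0$; once this occurs, $Q$ keeps a white node alive in every subsequent round, so white color survives forever.

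For the robust set I would take $Q$ to be a unit hyper-cube, namely any shift of $\{0,1\}^d\subseteq \mathbb{T}_L^d$, of size $2^d$. The key combinatorial observation is that in every coordinate direction, precisely one of the two torus-neighbors of a node $v\in Q$ lands back in $Q$ (the one that toggles $v$'s bit in that coordinate), so each $v\in Q$ has exactly $d$ of its $2d$ neighbors inside $Q$. Consequently, if $Q$ is fully white in some configuration $\mathcal{C}_t$, every $v\in Q$ sees at least $d$ white and at most $d$ black neighbors. Since the majority rule flips a white node only under a strict majority of black neighbors (ties are broken in favor of the current color), every $v\in Q$ remains white in $\mathcal{C}_{t+1}$. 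Iterating, $Q$ is $w$-robust.

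Next I would tile $\mathbb{T}_L^d$ into $m=\Theta(L^d)$ pairwise disjoint translates $Q_1,\dots,Q_m$ of $Q$ (concretely, take as starting nodes the set $\{0,2,\dots,2\lfloor L/2\rfloor-2\}^d$, which works regardless of the parity of $L$; the precise constant is irrelevant, only that $m=\Theta(L^d)$). Because the $Q_k$ are disjoint, the events ``$Q_k$ is fully white in $\mathcal{C}_0$'' are mutually independent, each of probability $(1-p)^{2^d}\ge \delta^{2^d}$. Hence
\[
\Pr\bigl[\text{no } Q_k \text{ is fully white in } \mathcal{C}_0\bigr]\le \bigl(1-\delta^{2^d}\bigr)^{m}=o(1),
\]
so a.a.s.\ at least one $Q_k$ is fully white initially, and by the previous paragraph white color survives in every round.

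The only step requiring any verification is the combinatorial identity that each node of a unit hyper-cube has exactly $d$ internal neighbors in $\mathbb{T}_L^d$ (immediate for $L\ge 3$), and the rest is a routine independence calculation; I do not anticipate any significant obstacle.
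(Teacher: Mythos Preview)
Your proof is correct and follows essentially the same approach as the paper: both exhibit the unit $d$-hypercube (the paper's $d$-dimensional hyper-square) as a $w$-robust set via the observation that each of its nodes has exactly $d$ of its $2d$ neighbors inside, and then tile $\mathbb{T}_L^d$ into $\Theta(L^d)$ disjoint copies to conclude that one is fully white initially a.a.s. The only cosmetic difference is that the paper phrases the final step via Chernoff on the count of fully-white hyper-squares, whereas you bound the probability that none is fully white directly; both arguments are equally valid here.
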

\begin{proof}
There are $L^d/\gamma$ pair-wise disjoint $d$-dimensional hyper-squares in $\mathbb{T}_L^d$ for some constant $\gamma\simeq 2^d$ as discussed at the end of Section~\ref{definitions}. Let us label them from $1$ to $L^d/\gamma$ and define Bernoulli random variable $x_k$ to be one if the $k$-th hyper-square is fully white in the initial configuration for $1\leq k\leq L^d/\gamma$. Let $X:=\sum_{k=1}^{L^d/\gamma}x_k$. Since each hyper-square has $2^d$ nodes, $\mathbb{E}[X]=\frac{L^d}{\gamma}(1-p)^{2^d}=\Omega(L^d)$ for $p\le 1-\delta$. Since $x_k$s are independent, by applying Chernoff bound a.a.s. there is a fully white $d$-dimensional hyper-square initially, which is a $w$-eternal set in the majority model. This is true because as we argued in Lemma~\ref{lemma 4} each node in a $d$-dimensional hyper-square $HS$ has exactly $d$ neighbors in $HS$.
\end{proof}
\subsection{Proof of Lemma~\ref{lemma 3}}
\label{lemma 3 proof}
\textbf{Lemma~\ref{lemma 3}.} \textit{In $\mathbb{T}_L^d=(V,E)$, a non-empty $(r,b)$-robust set intersects at least $2^{r-1}$ pair-wise disjoint $(r,w)$-robust sets.}
\begin{proof}
We do induction on $r$. As the base case we show that a $(2,b)$-robust set $S$ in $\mathbb{T}_L^d$ intersects at least $2^{2-1}=2$ disjoint $(2,w)$-robust sets $W_1$ and $W_2$. There exists some coordinate $j$ so that there are two nodes $i^{(1)}=(i_{1}^{(1)},\cdots,i_{d}^{(2)})$ and $i^{(2)}=(i_{1}^{(2)},\cdots,i_{d}^{(2)})$ in $S$ with $i_{j}^{(1)}< i_{j}^{(2)}$ (otherwise $S$ includes only one node, which cannot be a $(2,b)$-robust set). Let $W_1=\{(i_1,\cdots,i_d)\in V:i_j=i_j^{(1)}\vee i_j=i_j^{(1)}-1\}$ and $W_2=V\setminus W_1$. Notice that $W_1$ includes $i^{(1)}$ and $W_2$ includes $i^{(2)}$, except if $i_{j}^{(1)}=1$ and $i_{j}^{(2)}=L$, but in this case we simply use $i_{j}^{(1)}+1$ instead of $i_{j}^{(1)}-1$ in $W_1$. For any node $i\in W_1$ (similarly in $W_2$), $2d-2$ of neighbors which differ with $i$ only in some coordinate $j'\ne j$ are all in $W_1$ (resp. $W_2$) and among the two neighbors which differ in the $j$-th coordinate at least one of them is in $W_1$ (resp. $W_2$) by construction. Thus, each node has at least $2d-1$ of its $2d$ neighbors in $W_1$ (resp. $W_2$), which implies it is a $(2,w)$-robust set.

Now, as the induction hypothesis assume that the statement is true for some $r\geq 2$, we show it holds also for $r+1$. Let set $S$ be an $(r+1,b)$-robust set. There exists some coordinate $j$ so that there are two nodes in $S$ which differ in the $j$-th coordinate, otherwise it includes only one node. Let level $L_k$ be the nodes whose $j$-th coordinate is $k$ for $1\leq k\leq L$. In other words, level $L_k$ is the node set of the $(d-1)$-dimensional torus attained by fixing the $j$-th coordinate to be $k$. Based on above, we know there are at least two levels which intersect $S$. Assume there are $1 \leq k_1,k_2\leq L$ such that $L_{k_1}$ and $L_{k_2}$ intersect $S$ but $L_{k_1+1}$ and $L_{k_2-1}$ do not and $(L_{k_1}\cup L_{k_1+1})\cap (L_{k_2-1}\cup L_{k_2})=\emptyset$. In other words, there are two disjoint pairs and each pair includes two adjacent levels, where one level intersects $S$ and the other one does not. If such pairs do not exist then there are $\Theta(L)$ levels which intersect $S$. Furthermore, each set $L_{2k-1}\cup L_{2k}$ for $1 \leq k \leq \lfloor L/2 \rfloor$ is an $(r+1,w)$-robust set because each node in $L_{2k-1}\cup L_{2k}$ has exactly $2d-1$ neighbors in it. Based on the last two statements if there do not exist such disjoint pairs of levels, we have $\Theta(L) \geq 2^{(r+1)-1}=2^r$ pair-wise disjoint $(r+1,w)$-robust sets which intersect $S$, which then we are done. Therefore, assume such disjoint pairs of levels $L_{k_1}\cup L_{k_1+1}$ and $L_{k_2-1}\cup L_{k_2}$ exist.

We define $S_1:=S\cap L_{k_1}$ and $S_2:=S\cap L_{k_2}$; let $(d-1)$-dimensional torus $\mathbb{T}_1$ (similarly $\mathbb{T}_2$) be the induced subgraph on node set $L_{k_1}$ (resp. $L_{k_2}$). We claim each node in $S_1$ (similarly $S_2$) has at least $r$ neighbors in $S_1$ (resp. $S_2$), which means $S_1$ (resp. $S_2$) is an $(r,b)$-robust set with respect to $\mathbb{T}_1$ (resp. $\mathbb{T}_2$). We prove the claim for $S_1$, and the proof for $S_2$ is analogous. Each node in $L_{k_1}$ has all its neighbors in $L_{k_1}$ except one in $L_{k_1-1}$ and one in $L_{k_1+1}$, but the one in $L_{k_1+1}$ is not in $S$ because $L_{k_1+1}\cap S=\emptyset$ by our assumption. Furthermore, each node in $S$ has at least $r+1$ neighbors in $S$, which implies that each node in $S_1$ has at least $r$ neighbors in $S_1$. Since $S_1$ is an $(r,b)$-robust set with respect to the $(d-1)$-dimensional torus $\mathbb{T}_1$, it intersects at least $2^{r-1}$ pair-wise disjoint $(r,w)$-robust sets in $\mathbb{T}_1$ by the induction hypothesis. The same argument applies to $S_2$ with respect to $\mathbb{T}_2$. Therefore, there are pair-wise disjoint sets $W_1^{(1)},\cdots,W_{2^{r-1}}^{(1)}\subset L_{k_1}$ (analogously $W_{2^{r-1}+1}^{(1)},\cdots,W_{2^{r}}^{(1)}\subset L_{k_2}$) such that a node $v$ in $W_{\ell}^{(1)}$ for $1\leq \ell \leq 2^{r-1}$ (resp. $2^{r-1}+1\leq \ell \leq 2^{r}$) has at least $2(d-1)-r+1$ neighbors in $W_{\ell}^{(1)}$, based on the definition of an $(r,w)$-robust set in a $(d-1)$-dimensional torus. Now, let set $W_{\ell}^{(2)}$ for $1\leq \ell \leq 2^{r-1}$ (similarly $1\leq \ell \leq 2^{r-1}$) be the mapping of set $W_{\ell}^{(1)}$ into $L_{k_1+1}$ (resp. $L_{k_2-1}$); that is, we change the $j$-th coordinate from $k_1$ (resp. $k_2$) to $k_1+1$ (resp. $k_2-1$) for each node in $W_{\ell}^{(1)}$ to obtain $W_{\ell}^{(2)}$. Now, we claim $W_{\ell}:=W_{\ell}^{(1)}\cup W_{\ell}^{(2)}$ for $1\leq \ell \leq 2^{r}$ are $2^r$ pair-wise disjoint $(r+1,w)$-robust sets in $\mathbb{T}_L^d$ which all intersect $S$. Firstly, each node in $W_{\ell}^{(1)}$ (similarly $W_{\ell}^{(2)}$) has at least $2(d-1)-r+1=2d-r-1$ neighbors in $W_{\ell}^{(1)}$ (resp. $W_{\ell}^{(2)}$) and one neighbor in $W_{\ell}^{(2)}$ (resp. $W_{\ell}^{(1)}$), which is overall $2d-r=2d-(r+1)+1$ neighbors in $W_{\ell}$; this implies that it is an $(r+1,w)$-robust set in $\mathbb{T}_L^d$. Furthermore, they are all disjoint because based on our construction $(L_{k_1}\cup L_{k_1+1})\cap (L_{k_2-1}\cup L_{k_2})=\emptyset$. Finally, $S$ intersects each $W_{\ell}$ for $1\leq \ell \leq 2^{r}$ because it intersects $W_{\ell}^{(1)}$ based on the induction hypothesis.  
\end{proof}

\subsection{Locally Dependent $\textbf{r}$-BP}
\label{dependency}
We want to prove that in two-way $r$-BP on $\mathbb{T}_L^d$ if $p \ll \mathscr{P}_1^{1/2^{r-1}}$ then a.a.s. white color survives forever. In Section~\ref{phase 2}, using Lemma~\ref{lemma 1} we showed that there is a constant $T(d,r)$ such that for each node to be black in the $T$-th round, it needs to have at least $2^{r-1}$ black nodes in its $T$-neighborhood initially. Let us introduce a new process on $\mathbb{T}_L^d$. Assume that the initial configuration is obtained in the following way: first we make each node black independently with probability $p^{1/2^{r-1}}$, and then each node will be assigned black color if it has at least $2^{r-1}$ black nodes in its $T$-neighborhood and white otherwise. Starting from such initial configuration, in each round a white node becomes black if it has at least $r$ black neighbors and black nodes remain unchanged. We call this process \emph{locally dependent $r$-BP}. Clearly, if we prove that in locally dependent $r$-BP on $\mathbb{T}_L^d$ for $p\ll \mathscr{P}_1$, a.a.s. white color survives, then we are done due to the monotonicity of two-way $r$-BP. To prove this statement, we rely on the results by Cerf and Manzo~\cite{cerf2002threshold} who showed that in $r$-BP on $\mathbb{T}_L^d$, white color will survive forever a.a.s. if $p\ll\mathscr{P}_1$. We show that a careful treatment of their proof results in the same statement for locally dependent $r$-BP. Note that the setting of $r$-BP with $p\ll \mathscr{P}_1$ is the same as locally dependent $r$-BP with $p\ll \mathscr{P}_1$. Firstly, they follow the same updating rule. Secondly, each node is black initially with probability $o(\mathscr{P}_1)$. This is trivial in $r$-BP and it is true in locally dependent $r$-BP since the number of possibilities of choosing $2^{r-1}$ nodes in the $T$-neighborhood of an arbitrary node in $\mathbb{T}_L^d$ is bounded by a constant, where we use that $r$, $d$, and $T$ are constants. The only difference is that in $r$-BP each node is black independently from all other nodes, but in locally dependent $r$-BP each node is black independently from all nodes which are not in its $2T$-neighborhood. (Two nodes which are in distance $2T$ or smaller are not independent since their $T$-neighborhood overlaps.) 
\begin{theorem} (derived from~\cite{cerf2002threshold})
\label{theorem 4}
In locally dependent $r$-BP on $\mathbb{T}_L^d$, white color will survive forever a.a.s. if $p=o(\mathscr{P}_1)$.
\end{theorem}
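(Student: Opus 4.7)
The plan is to audit the Cerf--Manzo proof of Theorem~\ref{theorem 2} in~\cite{cerf2002threshold} and localise every step in which full independence of the initial Bernoulli variables is invoked. Their argument bounds, via a union bound over ``critical droplets'', the probability that the initial configuration internally spans a large hyper-rectangle; the only probabilistic inputs are estimates of the form $\Pr[S \text{ is all black initially}] = p^{|S|}$ for various node sets $S$, combined with polynomial-in-$L$ counts of positions and shapes. In locally dependent $r$-BP, a node's initial colour is a deterministic function of the underlying Bernoulli variables in its $T$-neighbourhood, so any two nodes at distance strictly greater than $2T$ are independent; moreover, by a union bound, each single node is initially black with probability at most $K_1 p = o(\mathscr{P}_1)$, where $K_1$ is the number of $2^{r-1}$-subsets of a $T$-neighbourhood (a constant because $d, r, T$ are constants).

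The key replacement is a sparsification step. For every set $S\subseteq V$, a greedy selection (using that the ball of radius $2T$ in $\mathbb{T}_L^d$ has some constant size $K_2$) produces $S'\subseteq S$ with $|S'|\ge |S|/K_2$ whose members are pairwise at distance strictly greater than $2T$. The events $\{v \text{ is initially black}\}_{v\in S'}$ are then mutually independent, so
\[
\Pr[S \text{ is all black}] \;\le\; \Pr[S' \text{ is all black}] \;\le\; (K_1 p)^{|S'|} \;\le\; (K_1 p)^{|S|/K_2}.
\]
I would substitute this bound wherever~\cite{cerf2002threshold} uses $p^{|S|}$. Since $\mathscr{P}_1$ is polylogarithmic in $L$, weakening the exponent of $p$ by the constant factor $1/K_2$ only costs a constant factor in the logarithm and is absorbed by the $o(\cdot)$ notation; in particular $(K_1 p)^{|S|/K_2}$ is still $o(\mathscr{P}_1^{|S|/K_2})$, which remains dominant over the polynomial counts appearing in the Cerf--Manzo union bound.

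The main obstacle is bookkeeping: verifying that the constant-factor loss $1/K_2$ in every exponent does not degrade any of the multi-scale estimates in~\cite{cerf2002threshold} past the threshold where their union bound closes. Because their proof accesses $p$ only through the scaling $p=o(\mathscr{P}_1)$ and carries exponential slack in the relevant combinatorial dimensions of the droplets, the substitution should be harmless. Concretely, I would pass through each occurrence of $p^k$ in their argument, replace it by $(K_1 p)^{k/K_2}$ via the sparsification lemma above, and check that the resulting expression remains $o(1)$ after summing over positions and droplet shapes. The conclusion is that no large droplet is internally spanned a.a.s., so $r$-BP started from the locally dependent initial configuration stabilises with at least one white node, which yields Theorem~\ref{theorem 4}.
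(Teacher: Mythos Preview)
Your approach is the same as the paper's at heart: exploit that nodes at distance greater than $2T$ are independent, sparsify to recover independence at the cost of a constant factor in every exponent, and note that this constant is absorbed into the slack of the Cerf--Manzo estimates. The paper carries out exactly this audit, but your characterisation of Cerf--Manzo as ``a union bound over critical droplets using only $\Pr[S\text{ all black initially}]=p^{|S|}$'' is too coarse: for $r\ge 3$ their argument is an induction on $r$ whose key technical step (their Lemma~5.2, restated here as Lemma~\ref{lemma 6}) bounds connection probabilities $\Pr[v\stackrel{p,r}{\longleftrightarrow}u\text{ in }\mathbb{T}_m^d]\le p^{\gamma\|v-u\|_\infty}$ in the \emph{final} configuration via a decomposition into slices, not a union bound over initially-black sets. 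Accordingly the paper sparsifies at the level of \emph{slices}---taking a $1/\alpha$ fraction that are pairwise at distance greater than $2T$---rather than at the level of individual nodes, and for the $r=1$ base case it uses positive correlation of the ``white'' events rather than sparsification. None of this changes the outcome: your black-box substitution $(K_1p)^{|S|/K_2}$ would still close once you unwind the induction, and the constant loss lands in $\gamma$ just as you predict. But the bookkeeping you flag as ``the main obstacle'' is precisely what the paper spells out, and it is tied to the slice structure of Lemma~5.2 rather than to arbitrary node sets $S$.
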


Cerf and Manzo~\cite{cerf2002threshold} proved the statement of Theorem~\ref{theorem 4} for $r$-BP. However, basically the same proof with some small changes can be applied to prove Theorem~\ref{theorem 4}. The main ingredients of their proof are two lemmata, Lemma 5.1 and Lemma 5.2 in their paper. The proof of Lemma 5.2 (originally proved by Aizenman and Lebowitz~\cite{aizenman1988metastability}) and how to combine these two lemmata to prove the final statement is quite straightforward and does not use the independence in the initial configuration. Thus it remains to show that the statement of Lemma 5.2 is also true for locally dependent $r$-BP, which we present in Lemma~\ref{lemma 6}.

For (locally dependent) $r$-BP on the $d$-dimensional torus $\mathbb{T}_L^d=(V,E)$ and two nodes $v,u\in V$, let $Pr[v\stackrel{p,r}{\longleftrightarrow} u\ \textrm{in}\ \mathbb{T}_L^d]$ be the probability that there is a path between $v$ and $u$ along the black nodes in the final configuration. We define $m_{-}(d,r,p):=\exp^{(r-2)}(\beta(d,r)p^{-\frac{1}{d-r+1}})$, where $\exp^{(r)}(x)=\exp(\exp^{(r-1)}(x))$ and $\exp^{(0)}(x)=x$.

\begin{lemma} (derived from Lemma 5.2 in~\cite{cerf2002threshold})
\label{lemma 6}
In locally dependent $r$-BP for $3\le r\le d$: there exist $\beta(d,r)>0$, $\gamma(d,r)>0$, $p(d,r)>0$ such that $\forall p<p(d,r)$ and $\forall m\le m_{-}(d,r,p)$, we have $Pr[v\stackrel{p,r}{\longleftrightarrow} u\ \textrm{in}\ \mathbb{T}_m^d]\le p^{\gamma\|v-u\|_{\infty}}$,
where $\|.\|_{\infty}$ denotes the infinity norm and $v,u$ are two nodes in $\mathbb{T}_m^d$.
\end{lemma}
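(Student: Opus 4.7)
The plan is to mirror the induction structure used by Cerf and Manzo in the proof of Lemma~5.2 in~\cite{cerf2002threshold}, and to show that the only difference introduced by local dependency can be absorbed into the constants $\beta$, $\gamma$, and $p(d,r)$. The guiding observation, already highlighted in the discussion preceding the lemma, is that in locally dependent $r$-BP, two events that are determined by the initial colors are mutually independent whenever the $T$-neighborhoods they depend on are pairwise disjoint, i.e., whenever their underlying supports lie at infinity-distance strictly greater than $2T$ in the torus.

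First, I would extract from the Cerf--Manzo argument, for each pair $v,u$, a collection of $\Theta(\|v-u\|_{\infty})$ \emph{local bad events} with the property that a black path from $v$ to $u$ in the final configuration forces every event in the collection to occur, where each event is determined by the initial colors inside a box of constant diameter $D(d,r)$ and has individual probability at most $p^{c}$ for some constant $c(d,r)>0$. In the independent setting Cerf and Manzo arrange these boxes to be pairwise disjoint along a path connecting $v$ and $u$, and the bound follows by multiplying marginal probabilities.

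Next, I would \emph{thin} this collection: greedily select a sub-collection of bad events whose supporting boxes are pairwise at infinity-distance strictly greater than $D+2T$. Because each box can force the exclusion of only a constant number $K(d,r,T)$ of other boxes, the surviving sub-collection still has cardinality $\Theta(\|v-u\|_{\infty})$, with the implicit constant decreased by a factor depending only on $d$, $r$, and $T$. By the local independence of the initial configuration, the events in the thinned sub-collection are mutually independent, so
\[
Pr\bigl[v \stackrel{p,r}{\longleftrightarrow} u \text{ in } \mathbb{T}_m^d\bigr] \leq \bigl(p^{c}\bigr)^{\Theta(\|v-u\|_{\infty})} \leq p^{\gamma'\|v-u\|_{\infty}}
\]
for a suitable $\gamma'(d,r)>0$. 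Since $T$ is a constant depending only on $d$ and $r$ (Lemma~\ref{lemma 1}), the resulting constants $\beta'$, $\gamma'$, $p'$ have the form required by the statement, and the upper bound on $m$ is unaffected because it is governed by the hierarchical scales in Cerf--Manzo, which in turn depend only on $p$ and the dimension.

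The main obstacle will be formally extracting these bounded-support local bad events from the Cerf--Manzo proof: their hierarchical induction on $r$ naturally produces events of this kind, but one has to verify carefully that the supports can be genuinely localized to boxes of constant diameter, rather than growing with $m$, and that after thinning the surviving events still preclude a connecting black path. Once this structural step is in place, the outer induction on $r$ can be carried out verbatim as in Cerf--Manzo, because their argument only uses that the exponent $\gamma$ is a strictly positive constant, so shrinking it by a dimension-dependent factor, either at the base case or at each inductive step, is harmless.
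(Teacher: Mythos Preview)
Your high-level strategy---thin the family of bad events so that the surviving ones are supported on sets at pairwise distance exceeding $2T$, then multiply and absorb the constant loss into $\gamma$---is exactly the mechanism the paper uses. Where your proposal diverges from what actually works is in the \emph{shape} of the bad events you posit. The Cerf--Manzo argument does \emph{not} produce events supported on boxes of constant diameter with probability $\le p^{c}$. Already for $r=2$, the relevant event for a slice $T_i$ is ``some node in a set of size $(2\|v-u\|_{\infty}+1)^{d-1}$ is initially black,'' whose support has width $2$ in one coordinate but extends with $\|v-u\|_{\infty}$ in the remaining $d-1$ coordinates, and whose probability is of order $\|v-u\|_{\infty}^{d-1}p$, not $p^{c}$. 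For $r\ge 3$ the decomposition involves, inductively, connection events inside $(d-1)$-dimensional slabs, again not localized to constant boxes. So the ``main obstacle'' you flag at the end is not a technical verification but a genuine obstruction: the constant-diameter localization simply fails.

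What rescues the argument---and what the paper does---is that these events are thin in \emph{one} coordinate direction (the direction realizing $\|v-u\|_{\infty}$). Thinning is therefore done along that axis: instead of all $\|v-u\|_{\infty}/2$ slices one keeps $\|v-u\|_{\infty}/\alpha_1$ of them, spaced at least $2T$ apart, which are then genuinely independent in the locally dependent model. For $r\ge 3$ there is a second dependence you did not anticipate: after fixing which slices are fully black, the connecting-path events between consecutive selected slices share a common slice and are therefore not independent even after spacing; the paper handles this by keeping only every other connecting event, again losing a constant factor in the exponent. Finally, the $r=1$ input (probability a box contains a black node) is bounded using positive correlation of whiteness rather than independence. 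All three modifications change only constants, so the conclusion $Pr[v\stackrel{p,r}{\longleftrightarrow}u]\le p^{\gamma\|v-u\|_{\infty}}$ survives with a smaller $\gamma$, as you correctly predicted.
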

Since the proof of Lemma 5.2 in~\cite{cerf2002threshold} is quite long, we do not reproduce the whole proof here. Instead, we point out how it should be changed in certain parts, where the independence in the initial configuration is used.

Consider (locally dependent) $r$-BP on $\mathbb{T}_m^d=(V,E)$, where $V=\{v_1,\cdots,v_{m^d}\}$. Define Bernoulli random variable $x_k$ for $1\le k \le m^d$ to be 1 if and only if node $v_k$ is white in the initial configuration. Since a single black node in the initial configuration suffices to make the whole torus black for $r=1$, we have
\[
Pr[v\stackrel{p,1}{\longleftrightarrow} u\ \textrm{in}\ \mathbb{T}_m^d]=1-Pr[\bigwedge_{k=1}^{m^d}x_k=1].
\]
In $r$-BP, this probability is equal to $1-(1-p)^{m^d}$ because each node is white independently with probability $1-p$. Since locally dependent $r$-BP does not enjoy the independence in the initial configuration, we cannot apply the same argument. However, we have
\[
Pr[v\stackrel{p,1}{\longleftrightarrow} u\ \textrm{in}\ \mathbb{T}_m^d]=1-Pr[\bigwedge_{k=1}^{m^d}x_k=1]=1-\prod_{k=1}^{m^d}Pr[x_k=1|\bigwedge_{k'=1}^{k-1}x_{k'}=1].
\]
We know that the whiteness of different nodes are positively correlated; that is, the probability of a node $v$ being white in the initial configuration does not decrease if we know that some other nodes are white in the initial configuration. Therefore, $Pr[x_k=1|\bigwedge_{k'=1}^{k-1}x_{k'}=1]\ge Pr[x_k=1]$. Since each node is black initially with probability $p$, we get $Pr[v\stackrel{p,1}{\longleftrightarrow} u\ \textrm{in}\ \mathbb{T}_m^d]\le 1-(1-p)^{m^d}$. As we will see later, this upper bound is all we need.

Then, they consider the case of $r=2$. They prove that in $r$-BP there exist $\beta(d,2)>0$, $C>0$ and $p(d,2)>0$ such that $\forall p<p(d,2)$ and $\forall m<m_{-}(d,2,p)$ the probability $Pr[v\stackrel{p,2}{\longleftrightarrow} u\ \textrm{in}\ \mathbb{T}_m^d]$ is at most $(C\|v-u\|_{\infty}^{d-1}p)^{\|v-u\|_{\infty}/2}$. The idea of the proof is as follows. Consider an integer $m\le m_{-}(d,2,p)=\beta(d,2)p^{-\frac{1}{d-1}}$. Let $v$ be a $d$-dimensional vector; we denote by $\underline{v}$ its first $d-1$ coordinates and by $\overline{v}$ the last one and write $v=(\underline{v},\overline{v})$. By symmetry, one can assume that $(\underline{v},\overline{v})$ and $(\underline{u},\overline{u})$ are such that $\overline{u}-\overline{v}=\|(\underline{v},\overline{v})-(\underline{u},\overline{u})\|_{\infty}$. Consider the \emph{slices}
\[
T_i:=\{(\underline{v},\overline{v})\in \mathbb{T}_m^d:\overline{v}\in\{2i,2i+1\}\}\ \textrm{for}\ i\in \mathbb{Z}.
\]
Suppose that there is a path along black nodes from $v$ to $u$ in the final configuration. Let $\mathscr{C}$ be the maximal connected set of black nodes in the final configuration which include $v$ and $u$. Let $a$ and $b$ be the first and the last indices of the slices intersecting $\mathscr{C}$. In all the slices $T_i$ for $i\in [a,b]$ there exists at least one black node $(\underline{w},\overline{w})$ such that $\|\underline{v}-\underline{w}\|_{\infty}\le \|v-u||_{\infty}$. The probability of this to happen in one fixed slice in $r$-BP is less than $1-(1-q)^{(2\|v-u\|_{\infty}+1)^{d-1}}$ where $q=2p-p^2$.  (Here, the estimate is similar to the $r=1$ estimate from above.) Furthermore, the slices being independent, one gets
\begin{equation}
\label{eq 1}
Pr[v\stackrel{p,1}{\longleftrightarrow} u\ \textrm{in}\ \mathbb{T}_m^d]\le d(1-(1-p)^{2(2\|v-u\|_{\infty}+1)^{d-1}})^{\|v-u\|_{\infty}/2}
\end{equation}
where the factor $d$ comes from the possible directions where $\|v-u\|_{\infty}$ is realized and we used $1-q=1-2p+p^2=(1-p)^2$. (Let us mention that the probability $q$ is not necessarily equal to $2p-p^2$ in locally dependent $r$-BP, but it is bounded by $p$ and $2p$; we will use this fact later.)

In locally dependent $r$-BP by applying our argument from above for $r=1$, we have that the probability that there exists at least one black node $(\underline{w},\overline{w})$ such that $\|\underline{v}-\underline{w}\|_{\infty}\le \|v-u||_{\infty}$ in one fixed slice is less than $1-(1-p)^{2(2\|v-u\|_{\infty}+1)^{d-1}}$. In contrast to $r$-BP, locally independent $r$-BP does not enjoy the independence of the slices, but we can consider $\|v-u\|_{\infty}/\alpha_1$ slices which are independent for some constant $\alpha_1>0$. Therefore, in locally dependent $r$-BP, we get
\begin{equation}
\label{eq 2}
Pr[v\stackrel{p,1}{\longleftrightarrow} u\ \textrm{in}\ \mathbb{T}_m^d]\le d(1-(1-p)^{{2(2\|v-u\|_{\infty}+1)^{d-1}}})^{\|v-u\|_{\infty}/\alpha_1}.
\end{equation}
Cerf and Manzo show that the right hand side of Equation~(\ref{eq 1}) can be upper-bounded by $(C\|v-u\|_{\infty}^{d-1}p)^{\|v-u\|_{\infty}/2}$ for some constant $C>0$. Applying basically the same calculations on the right hand side of Equation~(\ref{eq 2}) yields a similar upper bound in locally dependent $r$-BP. Using the estimate $1-\exp(x)\le -x$ implies that
\[
Pr[v\stackrel{p,1}{\longleftrightarrow} u\ \textrm{in}\ \mathbb{T}_m^d]\le d(-2(2\|v-u\|_{\infty}+1)^{d-1}\ln (1-p))^{\|v-u\|_{\infty}/\alpha_1}.
\]
For $p$ small, $\ln (1-p)\ge -2p$ and hence we have
\[
Pr[v\stackrel{p,2}{\longleftrightarrow} u\ \textrm{in}\ \mathbb{T}_m^d]\le (C\|v-u\|_{\infty}^{d-1}p)^{\|v-u\|_{\infty}/\alpha_1}.
\]
Therefore, in locally dependent $r$-BP we get the same upper bound as $r$-BP except that 2 is replaced by $\alpha_1$. We will see that this is all we need to prove our statement.

For $r\ge 3$, they apply an induction on the dimension $d$ and on the parameter $r$. First, they modify the initial configuration by adding some black nodes and they assume that some nodes become black if they have at least $r-1$ black neighbors instead of $r$. Such assumptions can be made due to the monotonicity of the process. They decompose the event $\{v\stackrel{p,r}{\longleftrightarrow} u\ \textrm{in}\ \mathbb{T}_m^d\}$. This upper-bounds the probability $Pr[v\stackrel{p,r}{\longleftrightarrow} u\ \textrm{in}\ \mathbb{T}_m^d]$ by the sum of the probability of some events of the following form: a particular set of slices must be fully black but not the slices in between, and the fully black slices are connected by some paths along black nodes. To compute the probability of such events, they utilize the independence of the slices. In locally dependent $r$-BP the slices are not independent, but we can consider a constant fraction of the slices which must be fully black such that they are independent, i.e., they are in distance at least $2T$ from each other. This is still problematic since for three selected slices $T_{i}$, $T_{i'}$, and $T_{i^{\prime\prime}}$ the event that there is a black path connecting $T_i$ to $T_{i'}$ and the event that there is a black path connecting $T_{i'}$ and $T_{i^{\prime\prime}}$ are not independent. To deal with this issue, we only consider the events for the connecting black paths one by one. This changes the exponent of our desired probability by a constant factor, similar to the case of $r=2$. By following their calculations, one can see that the effect of these constant factors appears in the choice of constant $\gamma$ in the satement of Lemma~\ref{lemma 6}. That is, the inequality $Pr[v\stackrel{p,r}{\longleftrightarrow} u\ \textrm{in}\ \mathbb{T}_m^d]\le p^{\gamma\|v-u\|_{\infty}}$ holds for a smaller value of $\gamma$.

Due to several lengthy calculations, we do not reproduce the whole proof. Basically, there are two small changes which one has to do to make the proof work for the locally dependent variant. 
\begin{itemize}
\item Firstly, at the end of page 80 they apply the result for $r=2$ which we discussed above, see Equation~(\ref{eq 1}). By some simplifications, they reach an upper bound of form $C_2q$, where $C_2>0$ is a constant and $q=p^2-2p$. By applying Equation~(\ref{eq 2}) instead of Equation~(\ref{eq 1}) and using the fact that $q$ is in the same magnitude as $p$, we get an upper bound of form $C_2'p$. (One needs to split the sum at the end of page 80 from an integer larger than 9.) 
\item As we discussed above, to get rid of the dependency among the slices, we choose a constant fraction of them. Therefore, in the calculations at the end of page 81, instead of $k$ we have $k/\alpha_2$ for some constant $\alpha_2>0$.
\end{itemize}
Both aforementioned constant factors can be hidden in constant $\gamma$ in the last line of calculations in page 81.

\subsection{Proof of Lemma~\ref{lemma 5}}
\label{proof of lemma 5}
\textbf{Lemma~\ref{lemma 5}.} \textit{In two-way $r$-BP on $\mathbb{T}_L^d$, if an $r$-dimensional hyper-square has occupied neighbor in at least $r$ distinct dimensions, it becomes occupied in $t'$ rounds for some even constant $t'$.}
\begin{proof}
Let us first set up some definitions. For an $r$-dimensional hyper-square $HS$ starting from $i=(i_1,\cdots,i_d)$, the set of nodes in $HS$ whose $j$-th coordinate is equal to $i_j$ (similarly $i_j+1$) induce an $(r-1)$-dimensional hyper-square, which is called a \emph{face} of $HS$; specifically the $(r-1)$-dimensional hyper-squares attained by fixing the $r$-th coordinate to be $i_r$ and $i_{r}+1$ are respectively called the \emph{upper face} and \emph{lower face}. Furthermore, the two outer neighbors of $HS$ in the $j$-th coordinate, where by definition $j$ is among the last $d-r$ coordinates, are simply attained by increasing or decreasing the $j$-th coordinate of all nodes in $HS$ by one. This implies that if the even-part (odd-part) of one of the outer neighbors of $HS$, say $HS'$, is black in some configuration, then each node in the even-part (resp. odd-part) of $HS$ has a black neighbor in $HS'$ in that configuration. In other words, if $HS'$ is occupied, then the upper face (similarly lower face) of $HS$ has an occupied neighbor in the $j$-th dimension. Similarly, if an inner neighbor of $HS$ in the $j$-th dimension for $1\leq j< r$, say $HS^{\prime\prime}$, is occupied, then the upper face of $HS$ (similarly lower face) has one occupied neighbor, namely the upper face (resp. lower face) of $HS^{\prime\prime}$. However, this is not the case for $j=r$. One of the inner neighbors in the $r$-th dimension has its upper face adjacent to the lower face of $HS$ (which implies if this neighbor is occupied, then the lower face of $HS$ has an occupied neighbor in the $r$-th dimension) and the other one has its lower face adjacent to the upper face of $HS$ (which provides an occupied neighbor in the $r$-th dimension for the upper face of $HS$ if it is occupied).

We prove our claim by induction on $r$. As the base case, we prove that for $r=2$ the statement is correct. Recall that we look at only the even rounds, otherwise we mention explicitly. Now, let $HS$ be a two-dimensional hyper-square starting from node $i$ with even parity (the odd case is analogous), then to become occupied it needs its even-part to become fully black. We want to show if $HS$ has occupied neighbors in two distinct dimensions in some configuration $\mathcal{C}_t$, it will be occupied in $\mathcal{C}_{t+t'}$ for some even constant $t'$. It has 4 inner neighbors and $2d-4$ outer neighbors. If two of the outer neighbors are occupied in $\mathcal{C}_t$, their odd-part must be black because their parity is different with $HS$. Thus, each node in the odd-part of $HS$ has two black neighbors, which implies that the odd-part becomes fully black in $\mathcal{C}_{t+1}$ and thus the even-part becomes black in $\mathcal{C}_{t+2}$, i.e., $HS$ is occupied. For the case that $HS$ has two occupied inner neighbors or one inner and one outer neighbor, see Figure~\ref{fig 4}.

Assume as the induction hypothesis that the claim is correct for $r-1\geq 2$, we prove it is true also for $r$. Suppose that the $r$-dimensional hyper-square $HS$ starting from $i$ has occupied neighbors in $r$ distinct dimensions in some configuration $\mathcal{C}_t$. Furthermore, assume the parity of $HS$ is even (the odd case is handled analogously). The lower face or upper face of $HS$ must have $r$ occupied neighbors as we discussed above; let it be the lower face. We claim one of the neighbors provides for each node in the even-part (similarly each node in the odd-part) of the lower face in every odd round (resp. even round) a black neighbor. To show that let us distinguish two cases. If one of the neighbors of $HS$ is outer, say the hyper-square $HS'$, then the lower face of $HS'$, which is a neighbor of the lower face of $HS$, satisfies our requirement. If there is no outer neighbor, then $HS$ has at least one occupied neighbor in each of the first $r$ dimensions. The neighbor in the $r$-th dimension which has its upper face adjacent to the lower face of $HS$ must be occupied (we assumed the lower face has $r$ occupied neighbors), which is then our required neighbor. Note that to occupy the lower face only making even nodes (the nodes in the even-part) black in even rounds or odd nodes (the nodes in the odd-part) black in odd rounds help because if for example the odd-part of the lower face is fully black in an even round, it does not have any impact on its occupation. By applying the induction hypothesis and the fact that one of the neighbors provides for each even node (similarly odd node) in each odd round (resp. even round) a black neighbor, we can conclude that the lower face must become occupied in a constant and even number of rounds. This is true because the remaining $r-1$ neighbors must make the lower face occupied under two-way $(r-1)$-BP and the extra neighbor needed by $r$-BP is always provided. Now, we can apply the same argument on the upper face by setting the lower face as the neighbor which provides for each even node (similarly odd node) in the upper face in each even round (resp. odd round) a black neighbor.   
\end{proof}
\end{document}